\title{On the Alesker-Verbitsky conjecture on hyperK\"ahler manifolds}
\author{Sławomir Dinew and Marcin Sroka}
\date{\small \textit{Dedicated to Professor Sławomir Kołodziej on the occasion of his 60th birthday}} 
\newtheorem{theorem}{Theorem}[section]
\newtheorem{proposition}[theorem]{Proposition}
\newtheorem{lemma}[theorem]{Lemma}
\newtheorem{remark}[theorem]{Remark}
\newtheorem{definition}[theorem]{Definition}
\newtheorem{conjecture}[theorem]{Conjecture}
\newcommand{\hh}{\mathbb{H}}
\newcommand{\rr}{\mathbb{R}}
\newcommand{\cc}{\mathbb{C}}
\newcommand{\hn}{\mathbb{H}^n}
\newcommand{\ii}{\mathfrak{i}}
\newcommand{\jj}{\mathfrak{j}}
\newcommand{\kk}{\mathfrak{k}}
\newcommand{\pa}{\partial}
\newcommand{\bpar}{\overline{\partial}}
\newcommand{\jpar}{\partial_J}
\newcommand{\bjpar}{\overline{\partial_J}}
\newcommand{\ob}{\nabla^{Ob}}
\newcommand{\bOmega}{\overline{\Omega}}
\newcommand{\Pf}{{\text{Pf }}}
\newcommand{\tr}{\text{tr}}
\begin{document}
\maketitle

\textbf{Abstract:} We solve the quaternionic Monge-Amp\`ere equation on hyperK\"ahler manifolds. In this way we prove the ansatz for the conjecture raised by Alesker and Verbitsky claiming that this equation should be solvable on any hyperK\"ahler with torsion manifold, at least when the canonical bundle is trivial holomorphically. The novelty in our approach is that we do not assume any flatness of the underlying hypercomplex structure which was the case in all the approaches for the higher order a priori estimates so far. The resulting Calabi-Yau type theorem for HKT metrics is discussed.    

\textbf{Key words:} quaternionic Monge-Amp\`ere equation, Calabi-Yau type theorem, HyperK\"ahler metrics, HKT metrics, Hyperhermitian manifolds

\textbf{MSC2010:} 35R01, 53C55, 53C26, 32W50 

\begin{center} \tableofcontents \end{center}

\section{Introduction}

The quaternionic Monge-Amp\`ere equation on compact HKT manifolds was introduced by Alesker and Verbitsky in \cite{AV10}. On a general hyperhermitian manifold $(M,I,J,K,g)$ of quaternionic dimension $n$ it takes the form
\[ \label{qma} \tag{1.1} \begin{gathered} (\Omega + \pa \jpar \phi )^n = e^f \Omega^n, \\ \Omega + \pa \jpar \phi > 0. \end{gathered}\]

The goal of this paper is to prove the higher order estimates for the quaternionic Monge-Amp\`ere equation (\ref{qma}) on compact hyperK\"ahler manifolds. The highlight of our result is that we do not assume any flatness or additional integrability of the underlying hypercomplex structure as was always the case for the higher order estimates known so far. The hyperK\"ahler condition, as will be seen during the derivation of the estimates, may in turn be interpreted as a curvature condition which we hope can be removed. As a result we solve this equation on any compact hyperK\"ahler manifold, cf. Remark \ref{comment}.

Our main result is stated as follows:  

\begin{theorem}\label{mainthm}
Let $(M,I,J,K,g)$ be a compact, connected hyperK\"ahler manifold. For any real function $f \in C^\infty(M)$ there exists a unique, up to the addition of a constant, smooth solution $\phi$ to the quaternionic Monge-Amp\`ere equation (\ref{qma}) provided the necessary normalization condition
\[\label{norm} \tag{1.2} \int_M e^f \Omega^n \wedge \bOmega^n = \int_M \Omega^n \wedge \bOmega^n\] is satisfied.
\end{theorem}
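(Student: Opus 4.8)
The plan is to solve (\ref{qma}) by the continuity method; the only genuinely new ingredient is the second-order a priori estimate. For $t\in[0,1]$ put $f_t:=tf+c_t$, where $c_t$ is the unique constant making the analogue of the normalization (\ref{norm}) hold for $f_t$; then $c_0=0$ and, by (\ref{norm}), $c_1=0$. Consider the family of equations
\[ (\Omega+\pa\jpar\phi_t)^n=e^{f_t}\,\Omega^n,\qquad \Omega+\pa\jpar\phi_t>0, \]
with $\phi_t$ normalized by $\int_M\phi_t\,\Omega^n\wedge\bOmega^n=0$, and let $S\subseteq[0,1]$ be the set of $t$ for which a smooth solution exists. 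Since $\phi_0\equiv0$ solves the $t=0$ problem, $0\in S$, so it suffices to show that $S$ is open and closed; then $1\in S$ and $\phi:=\phi_1$ is the solution sought.

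Openness is the implicit function theorem in Hölder spaces. Writing the equation as $F(\phi,t):=\log\!\big((\Omega+\pa\jpar\phi)^n/\Omega^n\big)-f_t=0$, the partial differential $D_\phi F$ at a solution $\phi_t$ is the Laplace-type operator $\psi\longmapsto n\,\pa\jpar\psi\wedge\Omega_{\phi_t}^{n-1}\big/\Omega_{\phi_t}^{n}$, where $\Omega_{\phi_t}:=\Omega+\pa\jpar\phi_t>0$; its principal symbol is the Moore (quaternionic) Hessian pairing, which is positive definite because $\Omega_{\phi_t}>0$, so $D_\phi F$ is a second-order linear elliptic operator, hence Fredholm of index zero on $C^{k,\alpha}$, and the maximum principle shows its kernel consists of the constants. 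Passing to the quotient by constants makes $D_\phi F$ an isomorphism, and the implicit function theorem together with elliptic bootstrapping produces a nearby smooth solution for $t'$ near $t$; thus $S$ is open.

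Closedness amounts to a priori estimates $\|\phi_t\|_{C^{k,\alpha}}\le C_k$ with $C_k$ independent of $t$. The $C^0$ bound uses that on a hyperK\"ahler manifold $\Omega$ and $\bOmega$ are parallel, so $\Omega^n\wedge\bOmega^n$ is a constant multiple of the Riemannian volume form $\mathrm{dV}_g$; the normalization becomes $\int_M e^{f_t}\,\mathrm{dV}_g=\int_M\mathrm{dV}_g$, and a Moser iteration (or a Kołodziej-type pluripotential argument) applied to the differential inequality satisfied by $\phi_t$ gives $\|\phi_t\|_{C^0}\le C$ uniformly, which in turn bounds $\|f_t\|_{C^k}$. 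The heart of the matter is the second-order estimate: a uniform two-sided bound $C^{-1}\Omega\le\Omega_{\phi_t}\le C\,\Omega$, equivalently $|\Delta_g\phi_t|\le C$. One applies the maximum principle to a test function of the form $e^{-A\phi_t}\,\tr_\Omega\Omega_{\phi_t}$ with $A$ large depending only on the geometry of $(M,I,J,K,g)$. Differentiating the equation twice and commuting covariant derivatives produces, besides the standard first-order terms which the factor $e^{-A\phi_t}$ absorbs, a family of curvature terms reflecting the non-flatness of the hypercomplex structure; the new point is that on a hyperK\"ahler manifold these terms are controlled — the hyperK\"ahler condition plays exactly the role of the curvature hypothesis needed to give the otherwise obstructive terms a favourable sign or render them lower order. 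I expect this to be the main obstacle, and the place where essentially all the work of removing the flatness assumption is concentrated. Once $\Omega_{\phi_t}$ is uniformly two-sided bounded the equation is uniformly elliptic with concave nonlinearity — the Moore determinant raised to the power $1/n$ is concave on positive hyperhermitian matrices, so $\log$ of the quaternionic Monge-Amp\`ere operator is concave in the Hessian — hence the Evans–Krylov theorem yields a uniform $C^{2,\alpha}$ estimate, and differentiating the equation and invoking Schauder theory iteratively gives uniform $C^{k,\alpha}$ bounds for all $k$. Therefore $S$ is closed.

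Uniqueness up to an additive constant follows from the maximum principle: if $\phi$ and $\psi$ both solve (\ref{qma}) then $\Omega_\phi^n=\Omega_\psi^n$, and since forms of degree two commute under the wedge product,
\[ 0=\Omega_\phi^n-\Omega_\psi^n=\pa\jpar(\phi-\psi)\wedge\sum_{k=0}^{n-1}\Omega_\phi^{k}\wedge\Omega_\psi^{n-1-k}; \]
the bracketed sum is a strictly positive $(2n-2,0)$-form, so this says that $\phi-\psi$ satisfies a linear second-order elliptic equation with no zeroth-order term, and the maximum principle on the connected manifold $M$ forces $\phi-\psi$ to be constant.
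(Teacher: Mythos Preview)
Your overall architecture is correct and matches the paper: continuity method, openness via the implicit function theorem, closedness via a priori estimates culminating in Evans--Krylov and Schauder bootstrapping, and uniqueness via the maximum principle applied to the telescoping identity. The $C^0$ step, the concavity argument for Evans--Krylov, and the uniqueness paragraph are all fine.

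The genuine gap is in your second-order estimate, and it is not a matter of vagueness but of mechanism. You propose the classical Yau-type test function $e^{-A\phi}\tr_\Omega\Omega_\phi$ and describe the obstruction as ``curvature terms reflecting the non-flatness of the hypercomplex structure'' which the hyperK\"ahler hypothesis should give a favourable sign. That is not where the difficulty lies. The problematic terms are \emph{third-order} terms in $\phi$: when you compute $\Delta_{g_\phi}\log\eta$ (with $\eta=\tr_\Omega\Omega_\phi$) you must control $\frac{\pa\eta\wedge\jpar\eta}{\eta^2}$, and the natural positive third-order quantity coming from differentiating the equation twice is, as the paper's Remark~\ref{lapdisc} makes explicit, only half of what is needed to absorb it. This is a structural feature of the quaternionic equation (roughly, the quaternionic Hessian entries are sums $\phi_{2i\overline{2j}}+\phi_{2j+1\overline{2i+1}}$, so squaring and differentiating does not produce the clean cancellation one gets in the complex case), and no curvature sign will fix it. What the hyperK\"ahler hypothesis actually buys is $d\Omega=0$, which removes all the terms coming from differentiating the background form $\Omega$; this in turn allows the paper to use the \emph{extremality condition} $\pa\eta/\eta=\gamma'\pa\phi$ at the maximum to replace the bad third-order term by $(\gamma')^2\,\pa\phi\wedge\jpar\phi$, a first-order quantity. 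The paper then chooses a \emph{nonlinear} $\gamma$ (specifically $\gamma(t)=\tfrac12\log(2t+1)$, so that $(\gamma')^2+\gamma''<0$) to make that term come in with the right sign, rather than your linear $\gamma(t)=At$. With $\gamma$ linear the corresponding term is $-A^2|\nabla\phi|^2_{g_\phi}$, which you cannot control without a gradient bound --- and you have not provided one.

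In short: the paper's route to Theorem~\ref{le} is not the Yau argument with a large constant absorbing lower-order terms; it is the B\l ocki-type trick of trading third-order for first-order via extremality, enabled by $d\Omega=0$, together with a carefully chosen concave $\gamma$. You should also be aware that the paper additionally proves a direct gradient estimate (Theorem~\ref{c1b}) and uses it, together with the $\pa\jpar\phi$ bound, to obtain the full real Hessian bound (Theorem~\ref{c2b}) before invoking Evans--Krylov; your sketch collapses these into a single step.
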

Equation (\ref{qma}) was motivated by the prior research in the local case, cf. \cite{A03}, and the attempt to prove the analog of the Calabi conjecture in quaternionic geometry, cf. \cite{AV10, V09}, as we explain in the next section. 
It was conjectured in \cite{AV10} that equation (\ref{qma}) can always be solved at least in the case when the canonical bundle of the HKT manifold is trivial holomorphically. 
Given the recent progress in solving Calabi-Yau type equations for non K\"ahler metrics, cf. \cite{GL10, TW10b, SzTW17, TW17, Sz18, TW19}, this is expected to hold even without the latter assumption, cf. \cite{AS17, Sr19}. To sum up Theorem \ref{mainthm} constitutes the ansatz for proving the following conjecture.

\begin{conjecture}{\cite{AV10}} \label{qCc}
Let $(M,I,J,K,g)$ be a compact, connected HKT manifold. Suppose that there exists a non vanishing $I$-holomorphic $(2n,0)$ form  $\Theta$ on $M$. For every real smooth function $f$ the quaternionic Monge-Amp\`ere equation (\ref{qma}) admits a unique, up to the constant, smooth solution provided the function $f$ satisfies the necessary condition \[ \label{normgen} \tag{1.3} \int_M (e^f-1) \Omega^n \wedge \overline{\Theta}=0.\]
\end{conjecture}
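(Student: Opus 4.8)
The natural route to the conjecture is the continuity method, using Theorem~\ref{mainthm} as the template whose a priori estimates must now be reproduced in the presence of non-vanishing torsion. Fix a nowhere vanishing $I$-holomorphic $(2n,0)$ form $\Theta$ and write $\Omega_\phi := \Omega + \pa\jpar\phi$. Since $\Omega^n$ is a nowhere vanishing section of the line of $(2n,0)$ forms, the quotient $\Omega_\phi^n/\Omega^n$ is a genuine function and (\ref{qma}) reads $\log(\Omega_\phi^n/\Omega^n)=f$. I would deform along $\log(\Omega_{\phi_t}^n/\Omega^n)=tf+c_t$, $t\in[0,1]$, where $c_t$ is fixed by requiring the normalization (\ref{normgen}) to hold for the right hand side at each $t$; concretely $c_t=\log\bigl(\int_M \Omega^n\wedge\overline{\Theta}\,\big/\int_M e^{tf}\Omega^n\wedge\overline{\Theta}\bigr)$. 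At $t=0$ one has $c_0=0$ and $\phi_0=0$, while at $t=1$ the hypothesis (\ref{normgen}) forces $c_1=0$, so $\phi_1$ solves (\ref{qma}). The structural input is that $\pa\overline{\Theta}=0$ (the conjugate of $\bpar\Theta=0$) together with $\pa\Omega=0$ makes $\int_M \Omega_\phi^n\wedge\overline{\Theta}$ independent of $\phi$ by Stokes, which is exactly why (\ref{normgen}) is the correct solvability constraint and why it is preserved along the path.

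Openness of the set of admissible $t$ is the soft part. The linearization of $\phi\mapsto \log(\Omega_\phi^n/\Omega^n)$ at $\phi_t$ is the second order operator $\psi\mapsto \tr_{\Omega_{\phi_t}}(\pa\jpar\psi)$, elliptic with principal symbol the quaternionic Laplacian of $\Omega_{\phi_t}$. Pairing against the holomorphic volume $\Omega_{\phi_t}^n\wedge\overline{\Theta}$ and integrating by parts via $\pa\overline{\Theta}=0$ shows this operator is Fredholm of index zero with kernel and cokernel reduced to the constants; the normalization (\ref{normgen}) then places us in the right subspace, and the implicit function theorem in $C^{2,\alpha}(M)$ produces a nearby solution.

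Closedness rests on the a priori estimates, and this is where the work lies. The zeroth order bound $\|\phi\|_{C^0}\le C$ is the first hurdle: in the general HKT case there is as yet no fully developed pluripotential theory, so I would derive it either by a Moser iteration run against $\Omega^n\wedge\overline{\Theta}$, integrating by parts through $\pa\overline{\Theta}=0$, or by importing a recent auxiliary Monge--Amp\`ere comparison after reformulating (\ref{qma}) as a real Monge--Amp\`ere inequality. The decisive estimate is the second order bound: one applies the maximum principle to $e^{-A\phi}\,\tr_\Omega\Omega_\phi$ and expands $\pa\jpar$ of this quantity along the equation. In the hyperK\"ahler setting the third order terms are absorbed by the concavity of $\log\det$ and the remaining curvature term has a favourable sign—this is precisely the curvature condition of Theorem~\ref{mainthm}. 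In the HKT case the same expansion produces extra terms built from the curvature of the Obata connection $\ob$ and from the failure of $\Omega$ to be $\ob$-parallel, and the whole problem is to dominate them. Here the trivialization furnished by $\Theta$ is essential: it flattens the canonical bundle, and differentiating $\bpar\Theta=0$ yields Bianchi-type identities expressing the Ricci-type contributions through derivatives of $\log(\Omega^n/\Theta)$, which I expect can be fed into the test function (by enlarging $A$ and possibly adding a term of Tosatti--Weinkove type) so that the torsion contributions are swallowed by the negative third order term. Once $\|\phi\|_{C^0}$ and $\tr_\Omega\Omega_\phi$ are controlled the equation is uniformly elliptic and concave in the Hessian, so Evans--Krylov gives a $C^{2,\alpha}$ bound and Schauder bootstrapping upgrades $\phi$ to $C^\infty$; uniqueness up to a constant follows from the maximum principle applied to the difference of two solutions.

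I expect the genuine obstacle to be exactly the removal of the curvature/torsion condition in the second order estimate, together with the $C^0$ estimate. These are the two places where the hyperK\"ahler hypothesis of Theorem~\ref{mainthm} enters essentially, and closing both in the general HKT setting—where $\ob$ is no longer the Ricci-flat Levi-Civita connection and the canonical volume is only holomorphic rather than parallel—is what separates the conjecture from the theorem established here.
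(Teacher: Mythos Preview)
The statement you are attempting to prove is labelled \emph{Conjecture} in the paper and is not proved there; the paper establishes only the hyperK\"ahler special case (Theorem~\ref{mainthm}) and explicitly presents it as the ansatz for Conjecture~\ref{qCc}. So there is no ``paper's own proof'' to compare against, and your proposal should be read as a roadmap towards an open problem rather than as a proof.

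That said, several points in your roadmap are misaligned with what is actually known. First, the $C^0$ estimate is \emph{not} an obstacle in the setting of Conjecture~\ref{qCc}: the Moser iteration you sketch, run against $\Omega^n\wedge\overline{\Theta}$ using $\pa\overline{\Theta}=0$, is precisely what Alesker--Verbitsky already carried out in \cite{AV10}, and the estimate has since been extended even without the holomorphic trivialization \cite{AS17,Sr19}. In particular the hyperK\"ahler hypothesis of Theorem~\ref{mainthm} is \emph{not} used at zeroth order, contrary to your final paragraph. Openness is likewise settled in general (Proposition~5.1 of \cite{A13}).

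The genuine gap is exactly the second order estimate, and here your expectation that ``the torsion contributions are swallowed by the negative third order term'' is the point at which the argument is known to break. The paper discusses this explicitly in Remarks~\ref{graddisc} and~\ref{lapdisc}: in the HKT case the positive third order term arising from differentiating the equation, displayed in (\ref{fourthpositive}), compensates only \emph{half} of the negative term $\pa\eta\wedge\jpar\eta/\eta^2$ that must now be retained, and the standard perturbations of the Yau--B\l ocki--Guan--Li type do not close the gap. Enlarging the coefficient $A$ or adding a Tosatti--Weinkove term does not address this deficit, because the shortfall is in the structure of the third order terms themselves, not in lower order curvature contributions. The identities you hope to extract from $\bpar\Theta=0$ control Ricci-type quantities but do not supply the missing half of the third order positivity. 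This is precisely why the conjecture remains open.
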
 

Let us just mention that the name for the equation (\ref{qma}) is justified as follows. Consider the quaternionic variables \[ \tag{1.4} \label{rc} q_i=x_{4i}+x_{4i+1} \ii + x_{4i+2} \jj + x_{4i+3} \kk, \] in the flat quaternionic space $\hn$ of quaternionic dimension $n$. There are the so called Cauchy-Riemann-Fueter derivatives 
\[ \label{crf} \tag{1.5} \frac{\partial \phi}{ \partial \bar{q}_{\alpha}}
= \frac{\partial \phi}{\partial x_{4\alpha}} + \ii \frac{\partial \phi}{\partial x_{4\alpha+1}} + \jj \frac{\partial \phi}{\partial x_{4\alpha+2}} + \kk \frac{\partial \phi}{\partial x_{4\alpha+3}}, \] 
\[ \label{crfb} \tag{1.6} \frac{\partial \phi}{ \partial q_{\alpha}}= \frac{\partial \phi}{\partial x_{4 \alpha}} - \frac{\partial \phi}{\partial x_{4 \alpha +1}} \ii -  \frac{\partial \phi}{\partial x_{4 \alpha +2}} \jj - \frac{\partial \phi}{\partial x_{4 \alpha +3}} \kk . \] 
It may be checked, cf. \cite{AV06} or \cite{Sr18} for an elementary calculation, that in this case
\[ \label{fqma} \tag{1.7} \big(\pa \jpar \phi \big)^n = \det \Bigg[ \frac{\partial^2 \phi}{ \partial q_\alpha \partial \bar{q_{\beta}}} \Bigg]_{\alpha, \beta} \Theta, \] where $\Theta$ is the canonical trivialization of $K_I(\hn)$ and the determinant has
to be understood in a proper way - as the Moore determinant, cf. \cite{M22}, of the hyperhermitian matrix. The Dirichlet problem for the operator (\ref{fqma}) was first considered by Alesker in \cite{A03} where continuous solutions were found for continuous
right hand sides. After that, the problem was solved in the smooth category by Zhu \cite{Z17}. Later, thanks to the form of (\ref{fqma}), the pluripotential approach was taken up resulting in providing continuous solutions even for right hand sides in $L^p$ spaces.
For $p \geq 4$ it is due to Wan, cf. \cite{W20}, and for $p>2$ due to the second named author, cf. \cite{Sr18}. In the latter case the exponent was proven to be optimal. This equation is also covered by the very general approach taken up in the last
two decades by Harvey and Lawson. They provide viscosity solutions for the Dirichlet problem even for domains in quaternionic manifolds, cf. \cite{HL09c, HL11, HL18}.    

Coming back to the advances towards proving Conjecture \ref{qCc}. Generally speaking the difficulties with obtaining a priori estimates for the equation (\ref{qma}) are caused, as we explain in depth in the next section, by the fact that a generic
hypercomplex structure locally is not the pull back of the flat structure from $\hn$. The lack of {\it quaternionic} coordinates forces one to work in the general case, at best, with  holomorphic coordinates for the reference complex structure $I$.
But this in turn results in equation (\ref{qma}) depending not only on the coefficients of the metric tensor but also of the endomorphism field $J$ since, in the holomorphic coordinates for $I$, 
\[ \tag{1.8} \label{of} \Omega + \pa \jpar \phi := \Omega^\phi_{ij} dz_i \otimes dz_j= \Big( -(g_{i \bar{k}} + \phi_{i \bar{k}})J^{\bar{k}}_j + ( g_{j \bar{k}} + \phi_{j \bar{k}})J^{\bar{k}}_i \Big) dz_i \otimes dz_j.\] 

Proving estimates in such coordinates is similar to solving the complex Monge-Amp\`ere equation by performing the calculations in generic real coordinates. Another drawback is that equation (\ref{qma}), instead of being of the form (\ref{fqma}), is an equation on the Pfaffian of the coefficients of the two form $\Omega + \pa \jpar \phi$ in the complex coordinates
\[ \tag{1.9} \label{cpe} \Pf \big[\Omega^\phi_{ij}\big]_{i,j} = e^f \cdot \Pf \big[\Omega_{ij}\big]_{i,j}. \]  
As will be seen in the calculations, the more times the equation (\ref{cpe}) is differentiated the more the lack of quaternionic derivatives becomes an issue. 

In the paper \cite{AV10}, the $C^0$ estimates for the solutions to the equation (\ref{qma}) were derived by the classical Moser iteration method in the setting exactly as in Conjecture \ref{qCc}. There the existence of the holomorphic trivialization is
crucial for the argument to work. Later it was shown that the $C^0$ estimate holds for (\ref{qma}) even without the assumption on the holomorphic triviality of $K_I(M)$, cf. \cite{AS17, Sr19}, but the methods are much more involved.
As for the higher order estimates, as we mentioned, they have been derived only under the assumption that the hypercomplex structure is integrable in the strong sense, i.e. is locally flat. In addition, either the initial metric should be very special or
the manifold has to admit in addition a hyperK\"ahler metric compatible with the flat hypercomplex structure. Precisely, in \cite{A13} it was shown that in the case the manifold is a torus or its quotient endowed with the flat hyperK\"ahler metric (this implies 
in particular the flatness of the hypercomplex structure), the Laplacian bound on the solution of (\ref{qma}) holds. Alesker proved also that the analogue of the Evans--Krylov theorem, cf. \cite{E82}, holds under the assumption of flatness of
the hypercomplex structure. In \cite{GV21} the authors show the Laplacian bound, unlike in Alesker's result depending on the gradient bound, for equation (\ref{qma}) on certain eight dimensional nilmanifolds endowed with the flat 
hypercomplex structures and torus action invariant initial HKT metrics. Shortly before this preprint was written down the preprint \cite{BGV21} appeared on arXiv. The authors take up there the parabolic approach for the equation (\ref{qma}) but the assumption under which they are able to prove the convergence of the flow to the solution are exactly as in Alesker's paper \cite{A13}.              

In the current note we carry out the computations for all the higher order estimates in geodesic coordinates for the Obata connection. This seems to be the main technical input which allows us to overcome the issues coming
from the dependence of the equation (\ref{qma}) on the second complex structure $J$. This is still not enough though to deal with the dependence on the metric. Because of that the hyperK\"ahler assumption appears. 

Our strategy in this paper is as follows. First of all, motivated by an influential idea of Błocki from \cite{B09} (see also \cite{Gu}), we show, cf. Theorem \ref{c1b}, the gradient estimate in the setting of Theorem \ref{mainthm}.
Here, the hyperK\"ahler condition plays similar role as non negativity of the holomorphic bisectional curvature of the background K\"ahler metric in the case of the complex Monge-Amp\`ere equation. This is somewhat  surprising since the hyperK\"ahler metric
is not necessarily of such a curvature. In the general hypercomplex case the gradient estimate does not seem to follow from arguments as in \cite{B09}. The troubles in this case are caused partially by the failure of the Leibniz rule for the operators (\ref{crf})
and (\ref{crfb}). Let us mention that (direct) gradient estimate is not known for general complex Hessian equations. We use Theorem \ref{c1b} in Section 5 to derive, with the aid of Theorem \ref{le}, the full $C^2$ estimate. In this case the proof is standard
and relies on an idea of Błocki from \cite{Bnotes}. The crucial result is Theorem \ref{le} proven in Section 4 which gives the bound on the Laplacian, or equivalently the quaternionic Hessian $\pa \jpar \phi$, for the solutions of (\ref{qma}). 
Let us remark that for general HKT metric a major issue is how to handle third order terms. Roughly speaking the positive term appearing, being formed by the squares of sums of certain third order derivatives, compensates only half of the negative term. At the moment we do not know how to deal with this difficulty
and the classical methods of \cite{Y78, B09, GL10} does not seem, cf. Remark \ref{lapdisc}, to work. The hyperK\"ahler assumption allows us to get rid of some terms coming from differentiating the metric coefficients and in turn, by considering a more general perturbation than the classical one in the Pogorelov approach for the Laplacian bound, cf. \cite{Y78, B09, GL10}, we are able to ignore the negative term.      
  
As we explain in details in the next section, Theorem \ref{mainthm} allows one to draw some conclusions concerning the HKT geometry of the underlying hyperK\"ahler manifold. First of all having a hypercomplex manifold $(M,I,J,K)$ any hyperhermitian metric $g$ provides a canonically associated smooth section of the canonical bundle $K_I(M)$
 via the map
\[ \label{cplexvol} \tag{1.10} g \longmapsto \Omega^n.\] 
Sections obtained in (\ref{cplexvol}) satisfy the properties, defined rigorously in the next section, which we call positivity and $J$-realness.

\begin{proposition} \label{props}
Let $(M,I,J,K,g)$ be a compact, connected hyperK\"ahler manifold. Given any positive and $J$-real trivialization $\Theta$, i.e. of the form $e^f\Omega^n$ for some smooth function $f$, of the canonical bundle $K_I(M)$ there is an HKT metric $\hat{g}$ such that
the associated HKT form $\hat{\Omega}$ satisfies \[ \label{pcv} \tag{1.11} \hat{\Omega}^n = \Theta. \] What is more the metric $\hat{g}$ may be chosen so that the associated HKT form is of the form 
$$a \big(\Omega + \pa \jpar \phi \big)$$
for a smooth function $\phi$ and a positive constant $a$. \end{proposition}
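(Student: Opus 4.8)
The plan is to reduce the statement directly to the solvability of the quaternionic Monge-Amp\`ere equation established in Theorem \ref{mainthm}. First I would unwind the definitions: a trivialization $\Theta$ of $K_I(M)$ is \emph{positive and $J$-real} precisely when it can be written as $\Theta = e^f \Omega^n$ for a real smooth function $f$ (this is the normal form recorded in the statement itself, and it holds because the hyperK\"ahler $\Omega^n$ is itself a positive, $J$-real trivialization, so any other trivialization differs from it by a nowhere-vanishing function, which positivity and $J$-realness force to be real and positive, hence of the form $e^f$). So the task becomes: given a real $f \in C^\infty(M)$, produce an HKT form $\hat\Omega$ of the form $a(\Omega + \pa\jpar\phi)$ with $\hat\Omega^n = e^f\Omega^n$.

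The obstruction to applying Theorem \ref{mainthm} verbatim is the normalization condition (\ref{norm}): an arbitrary $f$ need not satisfy $\int_M e^f\,\Omega^n\wedge\bOmega^n = \int_M \Omega^n\wedge\bOmega^n$. This is exactly what the free constant $a>0$ is for. I would set $\tilde f := f - \log c$ where $c := \left(\int_M e^f\,\Omega^n\wedge\bOmega^n\right)\big/\left(\int_M \Omega^n\wedge\bOmega^n\right) > 0$, so that $\tilde f$ now satisfies (\ref{norm}) by construction. Then Theorem \ref{mainthm} supplies a smooth $\phi$ with $(\Omega + \pa\jpar\phi)^n = e^{\tilde f}\Omega^n$ and $\Omega + \pa\jpar\phi > 0$. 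Writing $a := c^{1/n} > 0$ and $\hat\Omega := a(\Omega + \pa\jpar\phi)$, we get $\hat\Omega^n = a^n (\Omega+\pa\jpar\phi)^n = c\, e^{\tilde f}\Omega^n = e^f \Omega^n = \Theta$, which is (\ref{pcv}).

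It remains to check that $\hat\Omega$ is genuinely the HKT form of an HKT metric $\hat g$. Since $\Omega + \pa\jpar\phi$ is a positive $(2,0)$-form with respect to $I$ that is $\pa$-closed (being $\Omega$, which is $\pa$-closed as $M$ is hyperK\"ahler hence a fortiori HKT, plus the exact term $\pa\jpar\phi$, which is $\pa$-closed because $\pa\jpar = -\jpar\pa$ up to the standard sign and $\pa^2 = 0$), and since positivity together with $J$-invariance are preserved under scaling by the positive constant $a$ and are exactly the conditions characterizing HKT forms among $(2,0)$-forms, the form $\hat\Omega$ defines a hyperhermitian metric $\hat g$ via the standard correspondence (\ref{of}) read backwards, and the HKT condition $\pa\hat\Omega = 0$ holds. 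The only mildly delicate point — and the one I would write out with care — is the verification that the perturbed form $\Omega + \pa\jpar\phi$, a priori only known to be a positive $(2,0)$-form, indeed satisfies the $J$-realness needed to come from a metric; but this is built into the definition of the operator $\pa\jpar$ and the inequality $\Omega + \pa\jpar\phi > 0$ appearing in (\ref{qma}), so there is no real obstacle here, only bookkeeping. The uniqueness clause of Theorem \ref{mainthm} (up to an additive constant in $\phi$) translates into the stated freedom in choosing $\hat g$; no separate argument is needed.
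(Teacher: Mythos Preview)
Your proposal is correct and follows exactly the route the paper intends: the paper does not spell out a standalone proof of Proposition~\ref{props} but indicates in Section~2.3 that it is the immediate consequence of Theorem~\ref{mainthm} obtained by normalizing $f$ and absorbing the resulting constant into the scaling factor $a$, together with the observation that the $\pa\jpar\phi$ perturbation preserves the HKT condition. Your treatment of the normalization constant $c$ and the verification that $\hat\Omega$ is an HKT form are precisely the bookkeeping the paper leaves implicit.
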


From Proposition \ref{props} it is easy to obtain the so called Calabi-Yau type theorem for HKT metrics, yet only on hyperK\"ahler manifolds.

\begin{remark}\label{vol}
Let $(M,I,J,K,g)$ be a compact, connected hyperK\"ahler manifold. Given any positively oriented volume form $\sigma$ on $(M,I)$ there is an HKT metric $g_\phi$, such that the associated HKT form is $a \big( \Omega + \pa \jpar \phi \big)$, for a smooth function $\phi$ and a positive constant $a$, such that \[ \label{pv} \tag{1.12} \omega_{I,g_\phi}^{2n} = \sigma. \] \end{remark}

A classical calculation allows us to obtain the following as a result of Remark \ref{vol}.

\begin{proposition} \label{pbc}
Let $(M,I,J,K,g)$ be a compact, connected hyperK\"ahler manifold. Given any representative \[ \rho \in c_1^{BC}(M,I) \] of the first Bott-Chern class of $(M,I)$, see \cite{T15}, there is an HKT metric $g_\phi$, whose associated HKT form is $\Omega+ \pa \jpar \phi$, such that
\[ \label{prf} \tag{1.13} Ricc(\nabla^{Ch}_{I,g_\phi}) = \rho.\] In (\ref{prf}) the symbol $Ricc(\nabla^{Ch}_{I,g_\phi})$ denotes the well known Chern--Ricci form associated to the Chern connection of the hermitian structure $(I, g_\phi)$ on $M$.  
\end{proposition}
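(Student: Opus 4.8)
The plan is to deduce this from Remark~\ref{vol} by the classical relation between the Chern--Ricci form and the Hermitian volume form, followed by a harmless rescaling. Throughout, regard $(M,I)$ as a complex manifold of complex dimension $2n$, and recall two standard facts. First, for a Hermitian metric $h$ on $(M,I)$ with fundamental $(1,1)$-form $\omega_{I,h}$, one has in local $I$-holomorphic coordinates $Ricc(\nabla^{Ch}_{I,h}) = -i\,\pa\bpar\log\det(h_{j\bar{k}})$ (with the normalization making $Ricc$ a representative of $c_1^{BC}(M,I)$), so that for any two Hermitian metrics $h,h'$ the difference $Ricc(\nabla^{Ch}_{I,h}) - Ricc(\nabla^{Ch}_{I,h'})$ equals $-i\,\pa\bpar$ of the logarithm of the globally defined positive function $\omega_{I,h}^{2n}/\omega_{I,h'}^{2n}$. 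Second, since $g$ is hyperK\"ahler it is K\"ahler and Ricci-flat for $I$, hence $Ricc(\nabla^{Ch}_{I,g}) = 0$ and $c_1^{BC}(M,I) = 0$.

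Now take a representative $\rho \in c_1^{BC}(M,I)$; being a $d$-closed real $(1,1)$-form in the trivial Bott--Chern class, it can be written $\rho = i\,\pa\bpar u$ for some real $u \in C^\infty(M)$, by the usual description of Bott--Chern cohomology of real $(1,1)$-forms. I would then apply Remark~\ref{vol} to the positively oriented volume form $\sigma := e^{-u}\,\omega_{I,g}^{2n}$ (no constraint on $\sigma$ is needed, since the constant $a$ below absorbs any rescaling). This yields a function $\phi \in C^\infty(M)$, a constant $a>0$, and an HKT metric $\widetilde{g}$ whose associated HKT form is $a\big(\Omega + \pa\jpar\phi\big)$ and which satisfies $\omega_{I,\widetilde{g}}^{2n} = \sigma$. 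Plugging this into the first fact above and using $Ricc(\nabla^{Ch}_{I,g}) = 0$ gives $Ricc(\nabla^{Ch}_{I,\widetilde{g}}) = -i\,\pa\bpar\log\big(\omega_{I,\widetilde{g}}^{2n}/\omega_{I,g}^{2n}\big) = -i\,\pa\bpar\log e^{-u} = i\,\pa\bpar u = \rho$.

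Finally I would pass from $\widetilde{g}$ to $g_\phi := a^{-1}\widetilde{g}$ in order to bring the associated HKT form into the required shape. Since the HKT form is linear in the metric, the form associated to $g_\phi$ is $a^{-1}\cdot a\big(\Omega + \pa\jpar\phi\big) = \Omega + \pa\jpar\phi$; it is $\pa$-closed, so $g_\phi$ is again HKT; and the Chern--Ricci form is invariant under multiplying the metric by a positive constant, so $Ricc(\nabla^{Ch}_{I,g_\phi}) = Ricc(\nabla^{Ch}_{I,\widetilde{g}}) = \rho$. This is the assertion of Proposition~\ref{pbc}.

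I do not anticipate a genuine obstacle: the whole analytic weight sits in Remark~\ref{vol} (ultimately in Theorem~\ref{mainthm}), and what remains is bookkeeping --- reconciling the normalization conventions for $c_1^{BC}(M,I)$ and for the Chern--Ricci form, checking that the potential $u$ can be taken real, and verifying that the final constant rescaling preserves both the HKT condition and the precise form $\Omega + \pa\jpar\phi$ of the HKT form. The use of Ricci-flatness is only a convenience; with an arbitrary Hermitian reference metric $h_0$ one writes instead $\rho = Ricc(\nabla^{Ch}_{I,h_0}) + i\,\pa\bpar w$ and takes $\sigma = e^{-w}\,\omega_{I,h_0}^{2n}$.
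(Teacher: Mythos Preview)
Your argument is correct and follows essentially the same route as the paper: reduce to Remark~\ref{vol} via the standard identity $Ricc(\nabla^{Ch}_{I,h'}) - Ricc(\nabla^{Ch}_{I,h}) = -i\,\pa\bpar\log(\omega_{I,h'}^{2n}/\omega_{I,h}^{2n})$. You are in fact slightly more careful than the paper in that you explicitly carry out the rescaling $g_\phi = a^{-1}\widetilde g$ needed to remove the constant $a$ from the HKT form and observe that this does not affect the Chern--Ricci form.
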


As the non constant conformal deformation of an HKT metric is never an HKT metric the last three results stated above are non trivial, even under the hyperK\"ahler assumption. Provided one can remove the extra assumption on the initial metric
the Calabi-Yau theorem for yet another class of metrics would be settled. Proving the Calabi-Yau type theorem for different class of hermitian (non K\"ahler) metrics was a subject of an intense study in the last decade. In the classical case of K\"ahler metrics 
it is known due to Yau \cite{Y78}. For the class of Gauduchon metrics it was settled only recently in \cite{SzTW17}, building on \cite{Sz18, TW19}, confirming in turn an old conjecture of Gauduchon from '80s. Actually the same result was proven
there for strongly Gauduchon metrics as well. In \cite{TW17} a Calabi-Yau type theorem, Corollary 1.3 in there, for balanced metrics was proven, yet like in our case, under an extra assumption that the manifold admits a K\"ahler metric. In this case the result follows of course from the Calabi-Yau theorem for K\"ahler metrics as well. The assumption of admitting K\"ahler metric was later relaxed to admitting merely an Astheno-K\"ahler one, cf. \cite{SzTW17}, in which case the classical Calabi-Yau theorem can not be applied. It is still an open problem whether the Calabi-Yau type theorem for the class of balanced metric holds in general, cf. \cite{TW19, SzTW17}. 

Let us finish this introduction by the remark in the spirit of the mentioned Harvey and Lawson theory. Like in the local case, in the global situation the equation (\ref{qma}) is a companion of the real and complex Monge-Amp\`ere equations which received
great attention in the last century and, in certain forms, are related to the fundamental differential geometric problems. 

The complex Monge-Amp\`ere equation on complex $n$ dimensional hermitian manifold $(M,I,g)$ taking the form
\[ \tag{1.14} \label{cma} (\omega + \ii \pa \bpar \phi)^n = e^f \omega^n\] was solved, as we mentioned, on K\"ahler manifolds by Yau \cite{Y78} and on general hermitian manifold by Tosatti and Weinkove \cite{TW10a, TW10b}, cf. also \cite{GL10}.
It was proven that it can be solved even on almost complex manifolds, cf. \cite{ChTW19}. 

As for the real Monge-Amp\`ere equation on a Riemannian manifold $(M,g)$ of dimension $n$, its {\it rough} version does not carry a substantial geometric meaning. It is nevertheless interesting from an analytic point of view. In this mentioned {\it rough} form
it can be written as
\[ \tag{1.15} \label{rma} \det \big( g + (\nabla^{LC})^2 \phi \big) = e^f \det g.\] In (\ref{rma}) the meaning of taking the determinant is that the symmetric bilinear forms $g + (\nabla^{LC})^2 \phi$ and $g$ are treated, via the metric, as the endomorphisms of
the tangent bundle and we take the determinant of this endomorphisms. These equations were treated for example in Li \cite{L90}, where they were solved under the non negative curvature assumption on $g$. This assumption was later removed by Urbas in \cite{U02}. 
Let us just mention that in case of this equation it is hard to obtain an easy normalization of $f$'s for which the equation can be solved.

The very special modification of this equation was treated earlier by Cheng and Yau in \cite{ChY82}. They considered affine manifolds, i.e. smooth real manifolds endowed with a flat torsion free connection, admitting a Riemannian metric, which they called affine K\"ahler metric, being locally given as the Hessian of a potential function in affine coordinates. The equation they considered is obtained by taking the affine K\"ahler metric in (\ref{rma}) and by exchanging the Levi--Civita connection there for the affine connection. 

\textbf{Acknowledgments:} The authors would like to use the opportunity of professor Sławomir Kołodziej's 60th birthday to express their gratitude to him for his constant support and advice. The first named author is supported by the National Science Center of Poland grant no. 
2017/26/E/ST1/00955. The second named author is supported by the National Science Center of Poland grant no. 2019/35/N/ST1/01372.

\section{Preliminaries}

In this section we introduce the notation and collect basic facts concerning hyperhermitian manifolds. We also prove technical or computational in nature results needed for the a priori estimates of Sections 3--5 in order to make the presentation there more straightforward.

\subsection{Hypercomplex geometry} 

We denote by
\[ \hh = \{ x_0+x_1\ii+x_2\jj+x_3\kk \: | \: x_0,x_1,x_2,x_3 \in \rr \} \] where $\ii^2= \jj^2= \kk^2=-1$ and $\ii \jj \kk = -1$ the field of quaternions with addition and multiplication being defined in the standard way. We consider $\hn$ as a right $\hh$ vector space. Let us recall what have become the standard definition. 
\begin{definition} \label{hypercomplex}
For a manifold $M$ of the real dimension $4n$ endowed with a triple of complex structures $I$, $J$, $K$ satisfying the quaternion relation
\[ I \circ J \circ K = -id_{TM} \] the tuple $(M,I,J,K)$ is called a hypercomplex structure. 
\end{definition}
A hypercomplex manifold admits many complex structures in particular the ones given 
\[ S_M= \{aI+bJ+cK \: | \: a^2+b^2+c^2=1 \} \]
(the so called twistor sphere).
\begin{remark}
We warn the reader that for us, in the whole text, endomorphisms act from the right on the tangent space. This convention is compatible with the one usually taken up in papers on hypercomplex geometry.
\end{remark} 
In that case each tangent space $T_x M$, for $x \in M$, becomes a right $\hh$-vector space where multiplication by $\ii$, $\jj$ and $\kk$ is given by $I_x$, $J_x$ and $K_x$ respectively.

Clearly the structure group of the hypercomplex manifold is reduced to $Gl_n(\hh)$ and vice versa each such reduction induces the almost complex structures $I$, $J$ and $K$ as in the definition above. The condition of the almost complex structures being integrable
is though not equivalent to the induced $Gl_n(\hh)$ structure being integrable in the strong sense of differential geometry, i.e locally $I$, $J$ and $K$ are not pull backs of the standard hypercomplex structure induced by $\ii$, $\jj$ and $\kk$ in $\hh^n$. In case the latter condition is satisfied such structures were studied in \cite{S75}. The latter is also equivalent to the existence of an atlas whose transition functions are affine maps with the endomorphism parts belonging to $Gl_n(\hh)$. On the bright side the integrability of the almost complex structures $I$, $J$ and $K$ implies the 0-integrability of the induced $Gl_n(\hh)$ structures (the reverse implication holds due to the Newlander-Nirenberg theorem), i.e. the existence of the $Gl_n(\hh)$ compatible torsion free connection. This is a non obvious result of Obata. Strong integrability of the $Gl_n(\hh)$ structure is equivalent to the Obata connection being in addition flat. 

\begin{theorem}{\cite{O56}}
For a hypercomplex manifold $(M,I,J,K)$ there exists a unique torsion free connection, denoted by $\nabla^{Ob}$, such that \[ \nabla^{Ob}I=\nabla^{Ob}J=\nabla^{Ob}K=0.\]
\end{theorem}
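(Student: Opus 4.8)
I would split the statement into uniqueness, which is a short algebraic argument relying only on the non-commutativity of $\hh$, and existence, which is the substantive part and the sole place where the integrability of $I,J,K$ (equivalently, the vanishing of their Nijenhuis tensors) is used. I would prove uniqueness first, as it also isolates the linear-algebraic input needed to organize the existence argument.

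\textbf{Uniqueness.} Let $\nabla,\nabla'$ be torsion-free connections with $\nabla I=\nabla J=\nabla K=0=\nabla'I=\nabla'J=\nabla'K$, and set $A:=\nabla'-\nabla\in\Gamma(T^*M\otimes T^*M\otimes TM)$. Torsion-freeness of both forces $A(X,Y)=A(Y,X)$, while parallelism of $I,J,K$ forces $A(X,Yq)=A(X,Y)q$ for every $q\in\hh$ (recall endomorphisms act on the right, so $TM$ is a right $\hh$-module and $A(X,-)$ commutes with this action). Combining these, $A(Xq,Y)=A(Y,Xq)=A(Y,X)q=A(X,Y)q$ as well. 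Now evaluate $A(X\ii,Y\jj)$ in two ways: clearing the second slot and then the first gives $A(X,Y)\ii\jj=A(X,Y)\kk$, whereas first transposing the two arguments and then clearing the slots gives $A(X,Y)\jj\ii=-A(X,Y)\kk$. Hence $A(X,Y)\kk=0$, and since $K$ is invertible, $A\equiv0$.

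\textbf{Existence.} It suffices to build one torsion-free $\nabla$ with $\nabla I=\nabla J=0$, since then $\nabla K=\nabla(I\circ J)=0$ as well. Because $I$ is integrable, $(M,I)$ is a complex manifold, and the flat connection in each holomorphic chart is torsion-free with $\nabla I=0$ (the coefficients of $I$ are constant there); patching these by a partition of unity yields a global such $\nabla$, and the set of all of them is an affine space modeled on $\Gamma(\mathcal A)$, where $\mathcal A\subset S^2T^*M\otimes TM$ has fibre the symmetric bilinear maps $A$ with $A(X,YI)=A(X,Y)I$. Replacing $\nabla$ by $\nabla+A$ with $A\in\Gamma(\mathcal A)$ alters the tensor $(X,Y)\mapsto(\nabla_XJ)(Y)$ by $A(X,YJ)-A(X,Y)J$, so one must solve, pointwise, $A(X,YJ)-A(X,Y)J=-(\nabla_XJ)(Y)$ with $A\in\mathcal A$; its homogeneous solutions are symmetric and right $\hh$-linear, hence vanish by the uniqueness computation, so a solution is unique once it exists. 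That it exists is the assertion that the torsion of $\nabla$ maps to zero in the cokernel of the antisymmetrization map $T^*M\otimes\mathfrak{gl}_n(\hh)\to\Lambda^2T^*M\otimes TM$: this cokernel decomposes, as a $Gl_n(\hh)$-module, into summands that one identifies with the Nijenhuis tensors of $I$, $J$, $K$, so the hypercomplex hypothesis $N_I=N_J=N_K=0$ forces the obstruction — the intrinsic torsion — to vanish. Equivalently, one checks by a direct Nijenhuis-tensor calculation that the displayed equation for $A$ is solvable precisely because $N_J=0$ (given $N_I=0$).

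\textbf{Main obstacle.} The decisive step is exactly this last one: verifying that integrability of $I$, $J$, $K$ leaves no residual obstruction to solving for $A$, i.e. that the intrinsic torsion of a $Gl_n(\hh)$-structure is exhausted by the Nijenhuis tensors. This is what makes the theorem non-trivial, and it is precisely where the failure of ``strong integrability'' (flatness of $\ob$) would manifest; everything else — uniqueness, the reduction of the three conditions to two, and the affine-space bookkeeping — is routine.
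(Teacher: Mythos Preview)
The paper does not contain a proof of this theorem: it is quoted from Obata's original article \cite{O56} and left unproved, with a pointer to \cite{O56} for a coordinate expression and to \cite{G97b} for an invariant formula. There is therefore no ``paper's own proof'' to compare your attempt against.

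On its own merits your sketch is sound. The uniqueness argument is complete and clean: symmetry plus right $\hh$-linearity of $A=\nabla'-\nabla$ in each slot forces $A(X,Y)\ii\jj=A(X,Y)\jj\ii$, hence $A=0$. For existence, the reduction to two structures via $K=IJ$, the construction of an initial torsion-free $I$-connection by averaging chart-wise flat connections (both torsion-freeness and $\nabla I=0$ are affine conditions, so a partition-of-unity average preserves them), and the identification of the remaining problem as solving $A(X,YJ)-A(X,Y)J=-(\nabla_XJ)(Y)$ with $A\in\mathcal A$ are all correct. You are also right that the only substantive step is showing this linear equation is solvable, i.e.\ that the intrinsic torsion of the $Gl_n(\hh)$-structure is controlled by the Nijenhuis tensors; you flag this honestly as the ``main obstacle'' but do not carry it out. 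To turn the sketch into a proof you would either write down an explicit $A$ in terms of $\nabla J$ and verify directly, using $N_J=0$, that it lies in $\mathcal A$ (this is essentially Gauduchon's invariant formula in \cite{G97b}), or perform the $Gl_n(\hh)$-representation-theoretic decomposition you allude to. Either route is standard, but neither is done here, so as written the existence half remains an outline rather than a proof.
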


The coordinate expression for this connection can be found in \cite{O56}. An invariant global formula can be found for example in Gauduchon's paper \cite{G97b}. The existence of this connection will be very important for the technical results stated below and for the computations involved in deriving the a priori estimates for the equation (\ref{qma}). 
\begin{remark}
For a more detailed discussion on quaternionic geometry one can refer to the two excellent papers \cite{AM96} and \cite{S86}.
\end{remark}
When considering a hypercomplex manifold $(M,I,J,K)$ we obtain that 
\[ \hh \cong \{a id_{TM} + b I +cJ +dK \: | \: a,b,c,d \in \rr \} \subset End(M)\] acts from the right on $TM$ and from the left on $T^*M$ or more generally from the left on differential forms. The convention we use for the latter action is against the commonly used. Namely, given any field of endomorphisms $L$ on $TM$, acting according to our convention from the right, we define its left action on the space of complex valued smooth differential forms by 
\[ L: \Lambda^k_\cc (M) \ni \alpha \longmapsto \alpha(\cdot L,..., \cdot L) \in \Lambda^k_\cc (M).\]

\begin{remark}
From now on, whenever it happens that on a hypercomplex manifold $(M,I,J,K)$ we do not specify with respect to which complex structure the Hodge bidegree is taken, it is taken with respect to $I$.
\end{remark}

Let $(M,I,J,K)$ be a hypercomplex manifold. Let us remind that we have the Dolbeault operators \[ \partial :=\partial_I \text{ and } \bpar :=\overline{\partial_I}\] associated to the complex structure $I$ on $M$. We are going to introduce the quaternionic analogue of the $\overline{\partial}$ operator, or rather $d^c_I := I^{-1} \circ d \circ I$. In this we follow Verbitsky, cf. \cite{V02}, who defined the differential operator $\partial_J$ by 
\[ \tag{2.1} \label{tdo} \partial_J:=J^{-1} \circ \overline{\partial} \circ J. \] 
Since the operator $J$ acts on the complex forms by exchanging the bidegree components
\[ \tag{2.2} \label{jof} J: \Lambda^{p,q}_I(M) \longrightarrow \Lambda^{q,p}_I(M) \] 
the operator $\jpar$ acts on this forms by 
\[ \tag{2.3} \jpar: \Lambda^{p,q}_I(M) \rightarrow \Lambda^{p+1,q}_I(M).\]
We also introduce the operator $\bjpar$ defined formally again by twisting \[ \bjpar := J^{-1} \circ \partial \circ J, \] but as the operator $J$ is real it is equal to $\overline{(\jpar)}$ as well. 

It was observed by Verbitsky, \cite{V02,V07}, that the bicomplex \[ \Big(A^{p,q}:=\Lambda^{p+q,0}_I(M),\partial, \jpar \Big),\] called by him the quaternionic Dolbeault bicomplex, not only resembles the Dolbeault bicomplex 
\[ \Big(\Lambda^{p,q}, \partial, \bpar \Big) \] but it is also isomorphic to the so called Salamon complex, cf. \cite{S86}, introduced by Salamon in the broader context of quaternionic manifolds (see \cite{V07} for details).

For the further reference we would like to introduce the notion of $J$-realness. This is done as follows. The composition of the operator (\ref{jof}) with the bar operator is an involution on $\Lambda^{p,q}_I(M)$ if $p+q$ is even. The bundle of fixed points for this endomorphism in $\Lambda^{2k,0}_I (M)$ was denoted in \cite{V10} by $\Lambda^{2k,0}_{I,\rr} (M)$. In short, $\alpha \in \Lambda^{2k,0}_{I,\rr} (M) $ if and only if \[ J \alpha = \overline{\alpha} \] and such a from is called $J$-real. 

\subsection{Hyperhermitian metrics}

\begin{definition}
A Riemannian metric $g$ on a hypercomplex manifold $(M,I,J,K)$ is called  hyperhermitian if it is hermitian with respect to $I$, $J$ and $K$. For a hyperhermitian manifold $(M,I,J,K,g)$ and any $L \in S_M$ we denote the associated hermitian form by 
\[ \tag{2.4} \label{} \omega_L(X,Y)=g(X L,Y) \] for $X,Y \in \Gamma(TM)$. We define also the associated hyperhermitian form 
\[ \tag{2.5} \label{} \Omega = \omega_J - \ii \omega_K. \] 
\end{definition}

It is elementary to check that $\Omega \in \Lambda^{2,0}_{I,\rr}(M)$. Let us elaborate on the role of $\Omega$ in encoding the hyperhermitian metric 
$g$, cf. \cite{H90} Chapter 2, Lemma 2.72. Suppose $(V,g)$ is a right $\hh$-vector space with a hyperhermitian inner product $g$. Such inner products 
correspond bijectively to the hyperhermitian sesquilinear forms 
\[ \tag{2.6} \label{} H = g + \ii \omega_I + \jj \omega_J + \kk \omega_K . \] By 
introducing 
\[ \tag{2.7} \label{} \overline{h} = g + \ii \omega_I\] and $\Omega$ as above we obtain 
\[ \tag{2.8} \label{} H = \bar{h} + \jj \Omega.\] What is more 
\[ \tag{2.9} \label{} \Omega = \overline{h}(\cdot \jj, \cdot ).\] Consequently $\bar{h}$, and in turn also $H$, is completely determined by $\Omega$ satisfying for any $v,w \in V$ 
\[\Omega(v  \jj ,w \jj) = \overline{\Omega(v,w)},\] or after taking the complexification of $V$, 
\[\label{posit1} \tag{2.10} \Omega(\cdot \jj, \cdot \jj ) = \overline{\Omega}\] and \[\Omega(v,v \jj ) = \overline{h}( v \jj, v \jj ) \geq 0,\]  or after taking the complexification, 
\[ \label{posit2} \tag{2.11} \Omega(z,\bar{z} \jj ) = \overline{h}( z \jj, \bar{z} \jj ) \geq 0 \] 
for any $z \in V^{1,0}_\ii$. The conditions (\ref{posit1}) and (\ref{posit2}) give the meaning of the inequality in (\ref{qma}).
\begin{remark}
In calculations it will be customary to assume that at the point of interest the hyperhermitian structure $(T_x M, I_x, J_x,K_x,g_x)$ is isomorphic to the standard model below, cf. (\ref{standj}) and (\ref{diag}). 
This can trivially be seen to be possible by taking the orthonormal basis of the form $e_0$, $e_0I$, $e_0J$, $e_0K$, ..., $e_{n-1}$, $e_{n-1}I$, $e_{n-1}J$, $e_{n-1}K$ for $(T_x M, I_x, J_x,K_x,g_x)$. As was discussed in \cite{Sr19}
in the presence of a second hyperhermitian metric the first one may still be assumed to be standard while the second one being diagonal. \end{remark} 

The right multiplications by $\ii$, $\jj$, and $\kk$ act on $\hn$ defining the almost complex structures $I$, $J$ and $K$ respectively. Let us introduce, next to the real coordinates (\ref{rc}), the holomorphic coordinates, for the complex structure $I$, by decomposing 
\[ \label{cc} \tag{2.12} q_i=z_{2i}+\jj z_{2i+1} \] for $i \in \{0,...,n-1\}$. As an easy calculation shows the action of $J$ in this holomorphic coordinates is
\[ \label{standj} \tag{2.13} \begin{gathered} (\partial_{z_{j}})J=(-1)^{j}\partial_{\overline{z_{j+(-1)^j}}}, \\ J^{-1}(d z_{j})=(-1)^{j}d\overline{z_{j+(-1)^j}} \end{gathered} \] for $j \in \{0,...,2n-1\}$.
Take a standard inner product on $\hn$, in coordinates from (\ref{rc}),
\[ g = dx_{4i} \otimes dx_{4i} + dx_{4i+1} \otimes dx_{4i+1}+ dx_{4i+2} \otimes dx_{4i+2} +dx_{4i+3} \otimes dx_{4i+3}. \]
We easily get the following expressions for the quantities associated with this hyperhermitian structure $(\hn, I,J,K,g)$
\[ \tag{2.14} \label{diag} \begin{gathered} \omega_I=-dx_{4i+1} \otimes dx_{4i } + dx_{4i} \otimes dx_{4i+1} + dx_{4i+3} \otimes dx_{4i+2} - dx_{4i+2} \otimes dx_{4i+3} \\
= dx_{4i} \wedge dx_{4i+1} + dx_{4i+3} \wedge dx_{4i+2}=\frac{\ii}{2}(dz_{2i} \wedge d\overline{z_{2i}} + dz_{2i+1} \wedge d \overline{z_{2i+1}}),\\
\omega_J=dx_{4i} \wedge dx_{4i+2} + dx_{4i+1} \wedge dx_{4i+3},\\
\omega_K=dx_{4i+2} \wedge dx_{4i+1} + dx_{4i} \wedge dx_{4i+3},\\
\Omega =\omega_J - \ii \omega_K =dx_{4i} \wedge dx_{4i+2} + dx_{4i+1} \wedge dx_{4i+3} - \ii dx_{4i+2} \wedge dx_{4i+1} -\ii dx_{4i} \wedge dx_{4i+3}\\
= (dx_{4i} + \ii dx_{4i+1})\wedge(dx_{4i+2}-\ii dx_{4i+3})=dz_{2i} \wedge dz_{2i+1}. \end{gathered}\]
\begin{remark} \label{ccvrc} One should note that the real coordinates introduced in (\ref{rc}) are not given by taking the real and imaginary part decomposition of the complex coordinates (\ref{cc}). The relation between these coordinates is \[ \begin{gathered} z_{2j}=x_{4j}+ x_{4j+1} \ii, \\ z_{2j+1}= x_{4j+2} - x_{4j+3} \ii, \end{gathered} \] for $j=0,...,n-1$.  
\end{remark}

Before introducing certain classes of hyperhermitian metric let us recall that in the paper \cite{G97} Gauduchon has distinguished in the affine space of all the hermitian connections, i.e. those satisfying \[ \nabla I = \nabla g = 0\] for a hermitian manifold $(M,I,g)$, the affine line of the canonical connections. Among them two classical ones will be important for our presentation.
\begin{proposition}{\cite{G97}} Let $(M,I,g)$ be a hermitian manifold. There exists a unique hermitian connection, denoted by $\nabla^{Ch}_{I,g}$, which will be called the Chern connection characterized by the fact that \[\big(\nabla^{Ch}_{I,g} \big)^{0,1} = \bpar. \]
There exists as well a unique hermitian connection, denoted by $\nabla^{B}_{I,g}$, which will be called the Bismut connection characterized by the fact that its torsion tensor, after lowering the upper index by $g$, is a three form.
\end{proposition}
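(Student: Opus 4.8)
Both claims describe two distinguished points of the affine space $\mathcal A$ of \emph{Hermitian connections} on $(M,I,g)$, i.e. connections $\nabla$ with $\nabla I=\nabla g=0$. The plan is to fix the Levi--Civita connection $\nabla^{LC}$ and write every metric connection as $\nabla^{LC}+\eta$ with $\eta\in\Omega^1(M,\mathrm{End}\,TM)$; then $g$-parallelism forces each $\eta(X)$ to be $g$-skew, while $I$-parallelism reads $[\eta(X),I]=-\nabla^{LC}_X I$. Thus $\mathcal A$ is an affine space modelled on $\Omega^1(M,\mathfrak u(TM))$, where $\mathfrak u(TM)$ denotes the endomorphisms that are simultaneously $g$-skew and commute with $I$. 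I will prove existence and uniqueness of each connection by showing that its defining property singles out exactly one point of $\mathcal A$.

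\textbf{Chern connection.} Complexifying, $\nabla I=0$ is equivalent to $\nabla$ preserving the splitting $TM\otimes\cc=T^{1,0}\oplus T^{0,1}$, and restriction to $T^{1,0}$ identifies a Hermitian connection on $(M,I,g)$ with a unitary connection on the Hermitian holomorphic bundle $(T^{1,0},h)$, $h:=g(\cdot,\overline{\cdot})$, whose Dolbeault operator is $\bpar$ (well defined since $I$ is integrable). The requirement $\big(\nabla^{Ch}_{I,g}\big)^{0,1}=\bpar$ fixes the $(0,1)$-part; splitting the compatibility identity $d\,h(s,t)=h(\nabla s,t)+h(s,\nabla t)$ by type gives $\pa\,h(s,t)=h(\nabla^{1,0}s,t)+h(s,\bpar t)$, which determines $\nabla^{1,0}$ uniquely and hence proves uniqueness. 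Conversely, defining $\nabla^{1,0}$ by this Koszul-type formula and $\nabla^{0,1}:=\bpar$ produces a connection whose $(0,1)$-part is $\bpar$ and which is $g$-compatible, the $(0,1)$-part of the compatibility identity following by conjugation; this gives existence.

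\textbf{Bismut uniqueness.} Write $c(X,Y,Z):=g(T^{\nabla}(X,Y),Z)$ for the torsion of $\nabla\in\mathcal A$ lowered by $g$. If $\nabla,\nabla'\in\mathcal A$ both have $c,c'$ totally antisymmetric, their difference $\eta:=\nabla'-\nabla\in\Omega^1(M,\mathfrak u(TM))$ has torsion $\tau:=c'-c$ given by $\tau(X,Y,Z)=a(X,Y,Z)-a(Y,X,Z)$, where $a(X,Y,Z):=g(\eta(X)Y,Z)$ is skew in its last two slots and $I$-linear in the sense $a(X,IY,Z)=-a(X,Y,IZ)$. A short manipulation using the cyclic symmetry of the $3$-form $\tau$ together with this skewness forces $a$ itself to be totally antisymmetric, so $a=\tfrac12\tau$. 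Total antisymmetry then upgrades $I$-linearity to invariance $a(I\,\cdot,I\,\cdot,\cdot)=a$ on every pair of slots; since any component of a $3$-form has at least two entries of the same holomorphic type, these contribute a factor $(\sqrt{-1})^2=-1$ under $I\otimes I$, which is compatible with invariance only if that component vanishes. Hence $a\equiv 0$ and $\nabla=\nabla'$.

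\textbf{Bismut existence.} I will produce the connection through its torsion. Set $c:=-d^c_I\,\omega_I$, equivalently $c(X,Y,Z)=-(d\omega_I)(IX,IY,IZ)$, and define $\nabla^{B}_{I,g}:=\nabla^{LC}+\eta$ with $\eta$ the contorsion determined by $g(\eta(X)Y,Z):=\tfrac12 c(X,Y,Z)$. As $c$ is a $3$-form, $\eta(X)$ is automatically $g$-skew, so $\nabla^{B}_{I,g}g=0$ and its torsion $3$-form is exactly $c$. The genuine content is the verification $\nabla^{B}_{I,g}I=0$, i.e. $[\eta(X),I]=-\nabla^{LC}_X I$, and I expect this to be the main obstacle. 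For it I would invoke the classical Hermitian identity expressing $g((\nabla^{LC}_X I)Y,Z)$ through $d\omega_I$ and the Nijenhuis tensor of $I$; since $I$ is integrable the Nijenhuis tensor vanishes, and the surviving $d\omega_I$-terms are matched precisely by $\tfrac12 c=-\tfrac12 d^c_I\omega_I$, which is where the normalisation of $c$ must be pinned down exactly. In contrast to the Chern case and to Bismut uniqueness, which are formal once the affine description of $\mathcal A$ is in place, this last step is the one true computation, and its success hinges on the integrability of $I$ entering via the Nijenhuis tensor.
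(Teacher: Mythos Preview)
The paper does not supply a proof of this proposition: it is quoted as a result from Gauduchon \cite{G97} and the text moves on immediately after the statement. So there is no ``paper's own proof'' to compare against.

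Your proposal is a correct account of the standard arguments. The Chern part is the classical characterisation of the unique $h$-unitary connection on a holomorphic Hermitian bundle whose $(0,1)$-part is $\bpar$, and your Koszul-type determination of $\nabla^{1,0}$ is exactly how it goes. Your Bismut uniqueness argument is also sound: from $a$ skew in the last two slots and $\tau$ totally skew you first extract cyclic invariance $a(X,Y,Z)=a(Z,X,Y)$, whence total antisymmetry, and then the $I$-bilinearity forces the $3$-form to vanish by the type argument you give. For Bismut existence you have the right construction ($c=-d^c_I\omega_I$, contorsion $\tfrac12 c$) and correctly identify the single nontrivial verification, namely $[\eta(X),I]=-\nabla^{LC}_X I$, which indeed reduces via the Hermitian identity for $\nabla^{LC}I$ to a matching of $d\omega_I$-terms once the Nijenhuis tensor drops out by integrability. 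You leave that last computation at the level of a sketch and flag the sign/normalisation of $c$ as the delicate point; that is honest, but if you want a self-contained proof you should carry it out explicitly (it is a few lines once the identity $2g((\nabla^{LC}_XI)Y,Z)=d\omega_I(X,Y,Z)-d\omega_I(X,IY,IZ)$, valid for integrable $I$, is written down).
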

Coming back to the hyperhermitian metric on hypercomplex manifolds, when considering the above connections associated to the hermitian structure $(M,L,g)$ for any $L \in S_M$ we add a subscript $L$, eg. $\nabla^{Ch}_{L,g}$. Let us recall the following definition:
\begin{definition}
A hyperhermitian metric $g$ on $(M,I,J,K)$ is called hyperK\"ahler, HK for short, if any of the equivalent conditions are satisfied \begin{itemize} \item $d\omega_I=d \omega_J = d \omega_K=0$, \item $d \Omega = 0$, \item $\nabla^{Ob} = \nabla^{LC}$, \item $\nabla^{B}_{I,g}= \nabla^{B}_{J,g}=\nabla^{B}_{K,g}= \nabla^{LC}$. \end{itemize} If moreover $M$ is simply connected the conditions above are equivalent to \[ Hol(g) \subset Sp(m)\] and $I,J,K$ being induced by this holonomy group.
\end{definition}

This class of metrics is standard to consider from the point of view of Berger's Riemannian holonomy theorem. Indeed it implies that $Sp(n)$, and $Sp(n)\cdot Sp(1)$, corresponding respectively to the hyperK\"ahler and quaternionic K\"ahler metrics,
are the only infinite families occurring, cf. \cite{B87}, which correspond to the hypercomplex, respectively quaternionic, geometry. There are only two known deformation classes in each dimension and two isolated examples, due to O'Grady, of hyperK\"ahler manifolds. Partially because of that one may be tempted to look for a natural generalizations of those. A possible attempt is as follows.  
\begin{definition}
A hyperhermitian manifold $(M,I,J,K,g)$ is called HKT, which stands for hyperK\"ahler with torsion, if any of the equivalent conditions is satisfied \begin{itemize} \item $\partial \Omega=0$, \item $\nabla^{B}_{I,g} = \nabla^{B}_{J,g}=\nabla^{B}_{K,g}$.\end{itemize}
\end{definition}
From the second condition it follows that HKT structures are natural differential geometric generalizations of HK structures where the torsion just vanishes. These metrics emerged originally from mathematical physics. More exactly connections with special holonomy and skew torsion occur naturally while studying the target space of sigma models in quantum theory. In the presence of the so called Wess-Zumino term and supersymmetry the HKT metrics appear, cf. \cite{HP96}.

An established mathematical treatment of basic properties of HKT manifolds is \cite{GP00}. One should note though that despite the name HKT manifolds do not admit  K\"ahler metrics in general. In fact a result of Verbitsky \cite{V05} shows they can be K\"ahler only in the case when the manifold already admits
HK metric.

\subsection{Quaternionic Monge-Amp\`ere equation}

The quaternionic Monge-Amp\`ere equation (\ref{qma}) on HKT manifolds proposed by Alesker and Verbitsky, cf. \cite{AV10}, naturally solves the prescribed trivialization problem (\ref{cplexvol})-(\ref{pcv}). The authors suggested to look for an HKT metric whose associated HKT form is \[ \Omega_\phi := \Omega + \partial \partial_J \phi \] for some smooth real function $\phi$ and for which $\Omega_\phi^n$ is the section we want to obtain because the $\pa \jpar \phi$ perturbation preserves the HKT condition. If such a $\phi$ exists then this new HKT metric $g_\phi$ can be obtained from $\Omega_\phi$ by applying the reasoning from the section above. We denote the associated hermitian forms by adding a subscript $g_\phi$, eg. $\omega_{I,g_\phi}$.

As we noted the canonical bundle $K_I(M)$ of a given HKT manifold is trivial topologically, with $\Omega^n$ providing a smooth trivialization. Unlike in the K\"ahler case where, up to the finite covering, topological triviality gives the holomorphic one,
here the canonical bundle is not holomorphically trivial in many cases, Hopf surfaces being one example. Arguably, cf. \cite{V09}, HKT metrics for which $\Omega^n$ is holomorphic trivialization constitute a hypercomplex analogue of a Calabi--Yau manifold. 
These metrics have in particular the property of being balanced with respect to any complex structure from $S_M$. They perfectly fit into the recently active stream of research on generalizations of Calabi-Yau spaces, the so called torsion
Calabi--Yau manifolds, cf. \cite{T15, Pi19}.

Let us now turn to other consequences of Theorem \ref{mainthm} advocated in the introduction. In doing so let us also keep in mind that actually an expectation is that Conjecture \ref{qCc} should hold even after dropping the assumption
on the holomorphic triviality of the canonical bundle $K_I(M)$.

For a hyperhermitian manifold $(M,I,J,K,g)$ and the constant $c_n$ depending only on the dimension, as can be seen from (\ref{diag}),  \[\Omega^n \wedge \overline{\Omega}^n=c_n \omega_I^{2n}.\] Consequently we see that the quaternionic Monge-Amp\`ere equation (\ref{qma}) is solvable for any $f$ if and only if \[(\omega_{I, g_\phi})^{2n} = \frac 1 c_n (\Omega + \partial \partial_J \phi)^n \wedge (\overline{\Omega + \partial \partial_J \phi})^n = \frac 1 c_n e^{(2f+2b)}\Omega^n \wedge \overline{\Omega}^n = e^{F+C}\omega_I^{2n} \] is satisfied with suitable $\phi$ and $C$ for any $F$. This shows that Remark \ref{vol} follows from Theorem \ref{mainthm}.

The above means that any representative of $c_1^{BC}(M,I)$ can be obtained as the Chern--Ricci curvature of an HKT metric $g_\phi$ as the standard calculation shows. This was noted already in \cite{Ma11} for what Madsen calls the projected Chern form,
cf. Section 7.1.3 in \cite{Ma11}.
Indeed, as is well known prescribing the Chern-Ricci curvature is equivalent to solving the Monge-Amp\`ere equation 
\[\left( \det ({g_{\phi}}_{i \overline{j}})_{i,j}\right)= e^{F+b}\left( \det (g_{i\bar{j}})_{i,j}\right)\] 
for some $b \in \rr$. This in turn means, by going from the chart expression to the global one, that 
\[ \omega_{I,g_\phi}^n = e^{F+b} \omega^n_I \] justifying that Proposition \ref{pbc} follows from Remark \ref{vol}.

\subsection{Technical results}

From now on we assume that the components of all tensors are taken with respect to a holomorphic coordinates $z_i$ for $I$. Another important convention we use is that whenever an unknown, eg. $i$, appears as an index its range is in $\{0,...,2n-1\}$. When an expressions $2i$ or $2i+1$ involving an unknown appears as an index the range for $i$ is in $\{0,...,n-1\}$. We often omit the summation symbols when it is clear that the summation takes place even when the Einstein summation convention does not apply directly.
    
First of all let us note the following properties of the operators $\pa$, $\jpar$, $\bpar$, $\bjpar$ introduced in this section. This result is elementary and based only on the integrability and anti commutativity of $I$ and $J$ but we do not know a reference containing the proof.
\begin{lemma}\label{ac}
For a hypercomplex manifold $(M,I,J,K)$ the following holds 

\[ \label{} \tag{2.15} \pa^2 = \bpar^2=\jpar^2=\bjpar^2=0, \]

\[\label{} \tag{2.16} \partial \bpar + \bpar \pa = \jpar \bjpar + \bjpar \jpar = \pa \jpar + \jpar \pa = \bpar \bjpar + \bjpar \bpar = \jpar \bpar + \bpar \jpar = \bjpar \pa + \pa \bjpar = 0. \]
\end{lemma}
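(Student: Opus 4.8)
The plan is to derive all the identities from two facts. The first is the integrability of $I$: it yields the classical Dolbeault relations $\pa^2=\bpar^2=0$ and $\pa\bpar+\bpar\pa=0$, as well as the decomposition $d=\pa+\bpar$. The second is the elementary remark that, for \emph{any} integrable complex structure $L$ on $M$, the operator $d^c_L:=L^{-1}\circ d\circ L$ acting on forms satisfies $(d^c_L)^2=L^{-1}\circ d^2\circ L=0$ together with the standard identity $d\,d^c_L+d^c_L\,d=0$ valid on every complex manifold. I would feed this in for $L=J$ and, decisively, for $L=K$; both are integrable complex structures on $M$, the integrability of $K$ being a consequence of that of $I$ and $J$ together with their anticommutativity.

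First I would settle the vanishing of the squares. For $I$ there is nothing to prove. For $J$, using $d=\pa+\bpar$ and the definitions $\jpar=J^{-1}\circ\bpar\circ J$, $\bjpar=J^{-1}\circ\pa\circ J$, one gets $d^c_J:=J^{-1}\circ d\circ J=\bjpar+\jpar$. As $J$ interchanges $\Lambda^{p,q}_I$ and $\Lambda^{q,p}_I$, the operators $\jpar$ and $\bjpar$ raise the $I$-bidegree by $(1,0)$ and $(0,1)$ respectively; hence $0=(d^c_J)^2=\jpar^2+(\jpar\bjpar+\bjpar\jpar)+\bjpar^2$ splits, according to the $I$-bidegree, into components of bidegrees $(2,0)$, $(1,1)$, $(0,2)$, which give at once $\jpar^2=\bjpar^2=0$ and $\jpar\bjpar+\bjpar\jpar=0$. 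Together with the classical relations for $I$ this establishes the first displayed line and the identities $\pa\bpar+\bpar\pa=0$, $\jpar\bjpar+\bjpar\jpar=0$ of the second line.

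For the four mixed relations I would use $d\,d^c_L+d^c_L\,d=0$ twice. For $L=J$ it reads $(\pa+\bpar)(\jpar+\bjpar)+(\jpar+\bjpar)(\pa+\bpar)=0$; its $(2,0)$ and $(0,2)$ parts are $\pa\jpar+\jpar\pa=0$ and $\bpar\bjpar+\bjpar\bpar=0$, and its $(1,1)$ part is
\[ (\jpar\bpar+\bpar\jpar)+(\bjpar\pa+\pa\bjpar)=0. \]
To separate the two summands I would bring in $L=K$. A direct check gives $d^c_K:=K^{-1}\circ d\circ K=J^{-1}\circ d^c_I\circ J$, and since $d^c_I$ is a nonzero scalar multiple of $\pa-\bpar$, it follows that $d^c_K$ is a nonzero scalar multiple of $\bjpar-\jpar$ --- with the sign opposite to that in $d^c_J=\jpar+\bjpar$. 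Hence the $(1,1)$ part of $d\,d^c_K+d^c_K\,d=0$ reads
\[ (\bjpar\pa+\pa\bjpar)-(\jpar\bpar+\bpar\jpar)=0, \]
and adding, resp. subtracting, the last two displayed relations gives $\bjpar\pa+\pa\bjpar=0$ and $\jpar\bpar+\bpar\jpar=0$ separately. (The $(2,0)$ and $(0,2)$ parts of the $K$-identity merely re-prove $\pa\jpar+\jpar\pa=0$ and $\bpar\bjpar+\bjpar\bpar=0$.)

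I expect the only genuinely non-formal point to be this last separation: a single use of $d\,d^c+d^c\,d=0$ (for $L=J$) yields just the \emph{sum} of the two $(1,1)$-relations, and complex conjugation maps that identity to itself and so gives nothing new, so one really does need the linearly independent relation coming from $K$ --- which is the one place where the integrability of $K$, equivalently the anticommutativity of the two integrable structures $I$, $J$, is used. Everything else is routine bookkeeping: tracking the harmless scalar constants attached to the form operators $I,J,K$, and recording that $J$ swaps the $I$-bidegrees so that the operator identities above may legitimately be decomposed bidegree by bidegree.
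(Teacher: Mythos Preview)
Your proof is correct and follows essentially the same route as the paper: both arguments compute $d^c_J$ and $d^c_K$ in terms of $\jpar,\bjpar$, use the identities $d\,d^c_J+d^c_J\,d=0$ and $d\,d^c_K+d^c_K\,d=0$ coming from the integrability of $J$ and $K$, decompose by $I$-bidegree, and then add/subtract the two $(1,1)$ components to separate $\jpar\bpar+\bpar\jpar=0$ from $\bjpar\pa+\pa\bjpar=0$. The only cosmetic difference is that the paper obtains $\jpar^2=\bjpar^2=0$ and $\jpar\bjpar+\bjpar\jpar=0$ directly by conjugating the corresponding $I$-Dolbeault identities by $J$, whereas you extract them from $(d^c_J)^2=0$; both are immediate.
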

\begin{proof} One can simply use the facts that 
\[ \tag{2.17} \label{*} dd^c_J+d^c_Jd=0, \] 
\[ \tag{2.18} \label{**} dd^c_K + d^c_K d = 0. \] 
This follows from the integrability of $J$ and $K$. Then by rewriting $d$ as $\partial + \bpar$ we obtain
\[ d = \partial + \bpar , \] 
\[ d^c_I = \ii (\bpar - \partial ) , \]
\[ d^c_J = J^{-1} \circ (\partial + \bpar ) \circ J = \bjpar + \jpar , \]
\[ d^c_K = (IJ)^{-1} \circ d \circ (IJ) = J^{-1} \circ (I^{-1} \circ d \circ I) \circ J = J^{-1} \circ d^c_I \circ J = \ii ( \jpar - \bjpar ). \]  
Comparing both sides in (\ref{*}) and (\ref{**}) and taking into an account the Hodge bidegrees with respect to $I$ gives the claim. More precisely (\ref{*}) gives
\[ \tag{2.19} \label{A} \partial \bjpar + \partial \jpar + \bpar \bjpar + \bpar \jpar + \bjpar \partial + \jpar \partial + \bjpar \bpar + \jpar \bpar =0, \] while (\ref{**}) gives
\[ \tag{2.20} \label{B} \partial \jpar - \partial \bjpar + \bpar \jpar - \bpar \bjpar + \jpar \partial - \bjpar \partial + \jpar \bpar - \bjpar \bpar =0.\] It turns out that 
\[ \partial \jpar + \jpar \partial = 0, \] 
as it is the component of bidegree $(2,0)$ of the left hand side of (\ref{A}),
\[ \bpar \bjpar + \bjpar \bpar = 0, \]
as it is of bidegree $(0,2)$ of (\ref{A}), 
\[ \tag{2.21} \label{C} \partial \bjpar + \bpar \jpar + \bjpar \partial  + \jpar \bpar = 0, \]
as it is of bidegree $(1,1)$ of (\ref{A}),
\[ \tag{2.22} \label{D} - \partial \bjpar + \bpar \jpar - \bjpar \partial + \jpar \bpar = 0, \] 
as it is of bidegree $(1,1)$ of (\ref{B}).
Adding and subtracting (\ref{C}) and (\ref{D}) we obtain 
\[ \bpar \jpar + \jpar \bpar = 0 \]
and
\[ \partial \bjpar + \bjpar \partial = 0. \]  
The only two remaining identities
\[\partial \bpar + \bpar \partial = 0, \]
\[ \jpar \bjpar + \bjpar \jpar = J^{-1} \circ ( \bpar \partial + \partial \bpar ) \circ J = 0\]
of course do hold.
\end{proof}
The first conclusion we may draw from this is, what we will use constantly during the computations of the sections to follow, that certain identities involving derivatives of the components of $J$ vanish locally and not only at a fixed point.
\begin{remark}
For any holomorphic coordinates for $I$ we have
\[ 0 = \big(\pa \jpar + \jpar \pa \big) (z_i) = \pa J^{-1} \bpar z_i + J^{-1} \bpar J dz_i,\] 
consequently 
\[ \bpar J \big(dz_i \big) = 0. \] In coordinates this reads \[\label{ahv} \tag{2.23} \bpar \big( J_{\overline{k}}^i d\overline{z_k} \big) = J_{\bar{k},\bar{l}}^i d\overline{z_l} \wedge d\overline{z_k} = \sum_{l<k} \big( J_{\bar{k},\bar{l}}^i - J_{\bar{l},\bar{k}}^i \big) d\overline{z_l} \wedge d\overline{z_k} =0. \] 
This in turn provides 
\[ J_{\bar{k},\bar{l}}^i = J_{\bar{l},\bar{k}}^i \] 
and by conjugation 
\[ J_{k,l}^{\bar{i}} = J_{l,k}^{\bar{i}} \] for all $i$, $j$ and $k$.
\end{remark}
In order to have even better control of the derivatives of $J$ we have to stick to the point. Let $\ob$ be the Obata connection for $(M,I,J,K)$. Since it is the complex, in particular for $I$, torsion free connection we have, in any holomorphic chart for $I$,
\[ \tag{2.24} \label{} \ob_{\partial_{z_i}} \partial_{\overline{z_j}}=\ob_{\partial_{\overline{z_i}}} \partial_{{z_j}}=0,\]
\[ \tag{2.25} \label{} \begin{gathered} \ob_{ \partial_{z_i}} \partial_{z_j} = \Gamma_{ij}^k \partial_{z_k}, \\ 
\ob_{ \partial_{\overline{z_i}} } \partial_{\overline{z_j}} = \Gamma_{\overline{i} \overline{j}}^{\overline{k}} \partial_{\overline{z_k}},\end{gathered} \]
\[ \tag{2.26} \label{} \begin{gathered} \ob_{ \pa_{z_i}} {dz_j} = -\Gamma_{im}^j d{z_m}, \\
\ob_{ \pa_{\overline{z_i}}} {\overline{dz_j}} = -\Gamma_{\bar{i}\bar{m}}^{\bar{j}} d{\overline{z_m}}. \end{gathered}\]
The condition \[\ob J = 0 \] gives 
\[ 0 
= \ob_{\partial_{z_i}} \big( J_{\overline{k}}^l d \overline{z_k} \otimes \partial_{z_l} + J_{k}^{\overline{l}} d z_k \otimes \partial_{\overline{z_l}} \big) \]
\[
= J_{\overline{k},i}^l d \overline{z_k} \otimes \partial_{z_l} + J_{\bar{k}}^l d \overline{z_k} \otimes \big( \Gamma_{il}^m \partial_{z_m}\big) + J_{k,i}^{\overline{l}} d z_k \otimes \partial_{\overline{z_l}} + J_{k}^{\overline{l}} \big( -\Gamma_{im}^k d{z_m} \big) \otimes \partial_{\overline{z_l}}.\] 
This in turn allows us to obtain the expressions 
\[ \label{chrj} \tag{2.27} \begin{gathered}  J_{\overline{k},i}^l = -J_{\bar{k}}^m \Gamma_{im}^l, \\
J_{k,i}^{\overline{l}} = J_{m}^{\overline{l}} \Gamma_{ik}^m,\\
J_{k,\bar{i}}^{\bar{l}} = -J_{k}^{\bar{m}} \Gamma_{\overline{i} \overline{m}}^{\bar{l}}, \\
J_{\bar{k},\bar{i}}^{l} = J_{\bar{m}}^{l} \Gamma_{\overline{i} \overline{k}}^{\bar{m}}.\end{gathered} \]
Since the Obata connection is torsion free and $I$-complex, for any chosen $p \in M$ we can choose $I$-holomorphic, geodesic coordinates which, from (\ref{chrj}), gives at $p$ the equalities
\[ \tag{2.28} \label{geodesic} J_{\overline{k},i}^l = J_{k,i}^{\overline{l}} = J_{k,\bar{i}}^{\bar{l}}=J_{\bar{k},\bar{i}}^{l}=0.\]
Finally, let us find the expression for the $\pa \jpar \phi$ perturbation at the point, in local coordinates.
\begin{lemma}
At the point on $(M,I,J,K)$, in any coordinates satisfying (\ref{standj}), for any $\phi$ we have:
\[ \tag{2.29} \label{} \begin{gathered} \partial_J \phi=(J^{-1} \bpar J) \phi=J^{-1} \big( \bpar \phi \big) = J^{-1} \Big(\sum\limits_{j=0}^{2n-1}  \phi_{\overline{j}} d\overline{z_j} \Big) \\
=\sum\limits_{j=0}^{2n-1} \phi_{\overline{j}} J^{-1}(d \overline{z_j})=\sum\limits_{j=0}^{2n-1} \phi_{\overline{j}} (-1)^j d {z_{j+(-1)^j}}= \sum\limits_{j} \big( \phi_{\overline{2j}} d {{z_{2j+1}}} - \phi_{\overline{2j+1}} d {{z_{2j}}} \big), \end{gathered}\]

\[ \begin{gathered} \pa \jpar \phi = \pa \big( \phi_{\overline{j}} J^{-1} d \overline{z_j} \big) = \phi_{i \overline{j}} dz_i \wedge J^{-1} d \overline{z_j}=\sum\limits_{i,j} \left( (-1)^{j+1} \phi_{i \overline{j+(-1)^j}} \right) dz_i \wedge dz_j \\
= \sum\limits_{i,j} \big( \phi_{2i \overline{2j}} dz_{2i} \wedge d z_{2j+1} + \phi_{2i+1 \overline{2j}} dz_{2i+1} \wedge d z_{2j+1} - \phi_{2i \overline{2j+1}} dz_{2i} \wedge d z_{2j} - \phi_{2i+1 \overline{2j+1}} dz_{2i+1} \wedge d z_{2j} \big). \end{gathered}\]
The last one, after rearrangement, gives
\[ \label{ghess} \tag{2.30} \begin{gathered} \pa \jpar \phi = \sum\limits_{i,j} \big( \phi_{2i \overline{2j}} + \phi_{2j+1 \overline{2i+1}} \big) dz_{2i} \wedge d z_{2j+1} \\
+ \sum\limits_{i<j} \big( \phi_{2i+1 \overline{2j}} - \phi_{2j+1 \overline{2i}} \big) dz_{2i+1} \wedge d z_{2j+1} \\
+ \sum\limits_{i<j} \big( \phi_{2j \overline{2i+1}} - \phi_{2i \overline{2j+1}} \big) dz_{2i} \wedge d z_{2j}. \end{gathered} \]

In the coordinates in which $\pa \jpar \phi$ is diagonal, formula (\ref{ghess}) reads 
\[ \label{} \tag{2.31} \phi_{2i \overline{2j}} =- \phi_{2j+1 \overline{2i+1}}\] 
for $i \not = j$ and 
\[ \tag{2.32} \label{} \begin{gathered} \phi_{2i+1 \overline{2j}} = \phi_{2j+1 \overline{2i}} \\
\phi_{2j \overline{2i+1}} = \phi_{2i \overline{2j+1}} \end{gathered} \]
for any $i,j$.
\end{lemma}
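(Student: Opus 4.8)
The statement is a local coordinate computation, so the plan is simply to unwind the definitions of $\jpar$ and $\pa$ and keep careful track of indices and signs. First, for the formula for $\jpar\phi$: since $\phi$ is a function and $J$ acts trivially on $0$-forms, the definition $\jpar = J^{-1}\circ\bpar\circ J$ collapses to $\jpar\phi = J^{-1}(\bpar\phi)$. Expanding $\bpar\phi = \sum_j \phi_{\overline j}\, d\overline{z_j}$ and using that $J^{-1}$ acts pointwise and $\cc$-linearly on forms gives $\jpar\phi = \sum_j \phi_{\overline j}\, J^{-1}(d\overline{z_j})$, which is the first asserted identity in any holomorphic chart for $I$. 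At a point where the hyperhermitian structure is put into the standard model (always arrangeable, cf.\ the remark preceding Section 2.4), formula (2.13) together with the reality of $J$ gives $J^{-1}(d\overline{z_j}) = \overline{J^{-1}(dz_j)} = (-1)^j\, dz_{j+(-1)^j}$; splitting $j$ into even and odd values and relabeling yields $\jpar\phi = \sum_j\big(\phi_{\overline{2j}}\, dz_{2j+1} - \phi_{\overline{2j+1}}\, dz_{2j}\big)$.

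For the formula for $\pa\jpar\phi$ the one point that is not pure bookkeeping is the Leibniz step: applying $\pa$ to $\jpar\phi = \phi_{\overline j}\, J^{-1}(d\overline{z_j})$ requires that $\pa\big(J^{-1}(d\overline{z_j})\big) = 0$. This is where Lemma \ref{ac} enters: since $J^{-1}(d\overline{z_j}) = \jpar(\overline{z_j})$ (as $J\overline{z_j} = \overline{z_j}$ and $\bpar\overline{z_j} = d\overline{z_j}$), the identity $\pa\jpar + \jpar\pa = 0$ gives $\pa\big(\jpar\overline{z_j}\big) = -\jpar\big(\pa\overline{z_j}\big) = 0$ because $\pa\overline{z_j} = 0$. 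Hence $\pa\jpar\phi = \pa\phi_{\overline j}\wedge J^{-1}(d\overline{z_j}) = \phi_{i\overline j}\, dz_i\wedge J^{-1}(d\overline{z_j})$. Substituting again $J^{-1}(d\overline{z_j}) = (-1)^j\, dz_{j+(-1)^j}$ at a standard point, performing the change of summation index $j\mapsto j+(-1)^j$ (an involution, with $(-1)^j$ picking up the sign noted in the lemma), and then breaking both $i$ and $j$ into even and odd parts produces the displayed four-term expansion. The final form (2.30) follows formally: in each of the families $dz_{2i}\wedge dz_{2j+1}$, $dz_{2i+1}\wedge dz_{2j+1}$, $dz_{2i}\wedge dz_{2j}$ one uses $dz_a\wedge dz_b = -dz_b\wedge dz_a$ to order the indices ($i<j$ for the like-parity families) and then collects the coefficients attached to each ordered monomial.

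Finally, the diagonal identities (2.31) and (2.32) are immediate from (2.30): if $\pa\jpar\phi$ is diagonal, meaning only the monomials $dz_{2i}\wedge dz_{2i+1}$ survive, then every coefficient in (2.30) indexed by an off-diagonal monomial vanishes; setting the coefficient of $dz_{2i}\wedge dz_{2j+1}$ to zero for $i\neq j$ gives $\phi_{2i\overline{2j}} = -\phi_{2j+1\overline{2i+1}}$, and setting the coefficients of $dz_{2i+1}\wedge dz_{2j+1}$ and of $dz_{2i}\wedge dz_{2j}$ to zero gives (2.32). I expect no conceptual obstacle here: the lemma is a bookkeeping statement. The only place where a genuine input is needed is the Leibniz step above, which relies on Lemma \ref{ac}; the remaining work — propagating the $(-1)^j$ signs correctly through the substitution $j\mapsto j+(-1)^j$ and sorting the wedge monomials — is routine but is the most error-prone portion of the argument.
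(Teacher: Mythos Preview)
Your proposal is correct and follows the paper's own line: the lemma is stated as a chain of coordinate identities with no separate proof, and the only substantive step---the vanishing $\pa\big(J^{-1}d\overline{z_j}\big)=0$ needed for the Leibniz rule---is precisely the conjugate of the identity $\bpar J(dz_i)=0$ established in the Remark containing (2.23), which you recover directly from Lemma~\ref{ac} applied to $\overline{z_j}$. The remaining index bookkeeping, parity splitting, and reading off (2.31)--(2.32) from the vanishing of off-diagonal coefficients in (2.30) is exactly as in the paper.
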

Using the above discussion we provide, for later reference, the expression for the Chern laplacian involving the quaternionic Hessian.
\begin{proposition} \label{chernlapl}
Let $(M,I,J,K,g)$ be a hyperhermitian manifold and $\phi \in C^\infty(M)$, then
\[ 2n \frac{\pa \jpar \phi \wedge \Omega^{n-1}}{\Omega^n} = \Delta^{Ch}_{I,g} \phi.\]
\end{proposition}  
\begin{proof}
It is well known that the Chern laplacian can be expressed as
\[2n \frac{\pa \bpar \phi \wedge \omega_I^{2n-1}}{\omega_I^{2n}}.\]
Let us choose any holomorphic coordinates such that at the point $x \in M$ the hyperhermitian structure is standard, in the sense that $(2.13)$ and $(2.14)$ are satisfied. In those coordinates we see, since $\omega_I=\frac{\ii}{2}(dz_{2i} \wedge d\overline{z_{2i}} + dz_{2i+1} \wedge d \overline{z_{2i+1}})$, 
\[2n \frac{\pa \bpar \phi \wedge \omega_I^{2n-1}}{\omega_I^{2n}} = 2(\phi_{2i\overline{2i}} + \phi_{2i+1\overline{2i+1}}).\]
On the other hand, because of $(2.30)$ and $\Omega = dz_{2i} \wedge dz_{2i+1}$, we see that also
\[2n \frac{\pa \jpar \phi \wedge \Omega^{n-1}}{\Omega^n} = 2(\phi_{2i\overline{2i}} + \phi_{2i+1\overline{2i+1}})\]
as required.
\end{proof}
We recall the basic facts concerning the Pfaffian. The following proposition is probably well known but we do not know the reference. The proof reduces to defining Pfaffian as below and checking the claimed equality (\ref{detpf}) of polynomials on sufficiently many skew-symmetric matrices.
\begin{proposition} \label{Pf}
There exists a polynomial which we denote by $\Pf$, with real coefficients, of degree $n$ on the space of skew-symmetric complex matrices of size $2n$, i.e. those satisfying $A^T=-A$, such that \[\label{detpf} \tag{2.33} \det = {\Pf}^2 \] as polynomials on this space.
\end{proposition}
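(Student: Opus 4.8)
The plan is to construct the Pfaffian explicitly as the standard combinatorial sum over perfect matchings and then verify the identity $\det = \Pf^2$ as an identity of polynomials by reducing to a case where it is classically known. Concretely, for a $2n\times 2n$ skew-symmetric matrix $A=(a_{ij})$ I would set
\[ \Pf(A) = \frac{1}{2^n n!} \sum_{\sigma \in S_{2n}} \mathrm{sgn}(\sigma) \prod_{i=1}^{n} a_{\sigma(2i-1)\sigma(2i)}, \]
or equivalently the sum over perfect matchings $\{(i_1,j_1),\dots,(i_n,j_n)\}$ of $\{1,\dots,2n\}$ of the sign of the associated permutation times $a_{i_1 j_1}\cdots a_{i_n j_n}$. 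This is manifestly a polynomial of degree $n$ in the entries, and since the entries of a skew-symmetric matrix are indexed by the $a_{ij}$ with $i<j$ (the diagonal vanishing, the lower triangle determined), it is a genuine polynomial on the affine space of skew-symmetric matrices. The real coefficients claim is immediate from the formula. It remains to check $\det A = \Pf(A)^2$.

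For the identity itself, the cleanest route is: first prove it over $\rr$ (or even just for a Zariski-dense subset of real skew-symmetric matrices), and then invoke that two polynomials with real coefficients agreeing on a Zariski-dense subset of $\rr^N$ agree identically, hence also agree as polynomials on $\cc^N$. Over the reals one can use the normal form theorem for skew-symmetric matrices: every real skew-symmetric $A$ can be written as $A = Q^T B Q$ with $Q$ orthogonal and $B$ block-diagonal with $2\times 2$ blocks $\begin{pmatrix} 0 & \lambda_k \\ -\lambda_k & 0 \end{pmatrix}$ (and possibly a zero block, which forces both sides to vanish). For such a $B$ one computes directly $\det B = \prod_k \lambda_k^2$ and $\Pf(B) = \prod_k \lambda_k$, so $\det B = \Pf(B)^2$. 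Then one uses the transformation laws $\det(Q^T B Q) = \det(Q)^2 \det B = \det B$ (as $\det Q = \pm 1$) and the analogous $\Pf(Q^T B Q) = \det(Q)\,\Pf(B)$; squaring the latter gives $\Pf(A)^2 = \det(Q)^2 \Pf(B)^2 = \det B = \det A$. Since every real skew-symmetric matrix has such a normal form, the polynomial identity $\det = \Pf^2$ holds on all of the real skew-symmetric matrices, hence identically.

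Alternatively, one can avoid even the orthogonal normal form: it suffices to check $\det = \Pf^2$ on the Zariski-dense set of skew-symmetric matrices of the special form $B = \mathrm{diag}\left(\begin{pmatrix} 0 & \lambda_1 \\ -\lambda_1 & 0\end{pmatrix}, \dots\right)$ together with their conjugates by $Gl_{2n}$, using the congruence-covariance $\Pf(P^T A P) = \det(P)\Pf(A)$ for arbitrary $P$ (which itself follows by a direct manipulation of the combinatorial sum, or by continuity from $P$ in a dense set). Any skew-symmetric matrix of maximal rank is congruent to the standard symplectic form $J_0$ and $\Pf(J_0) = 1$, $\det J_0 = 1$, so the identity holds on the dense set of maximal-rank skew-symmetric matrices and therefore everywhere.

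The main obstacle, and the only genuinely non-formal point, is establishing the congruence transformation law $\Pf(P^T A P) = \det(P)\,\Pf(A)$ (equivalently the orthogonal version used above): this requires either a careful bookkeeping argument directly on the perfect-matching expansion, or the slick observation that both sides are polynomials in the entries of $P$ and $A$, that the identity is clear when $A = J_0$ is the standard symplectic form (reducing to the classical fact that the Pfaffian of $P^T J_0 P$ equals $\det P$), and that the $Gl_{2n}$-orbit of $J_0$ is Zariski-dense among skew-symmetric matrices of full rank — combined with the fact that $\Pf$ is visibly continuous, so one extends to all $A$ by density and then deduces the global identity. Everything else is routine verification of a polynomial identity on enough points, exactly as indicated in the statement.
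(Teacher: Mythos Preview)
Your argument is correct and follows essentially the same route the paper sketches: define $\Pf$ explicitly (your perfect-matching sum is the combinatorial unwinding of the paper's exterior-algebra formula in Definition~\ref{Pfd}) and then verify $\det=\Pf^2$ by checking it on sufficiently many skew-symmetric matrices. Your use of the congruence law $\Pf(P^TAP)=\det(P)\Pf(A)$ together with a Zariski-density argument is exactly the kind of ``sufficiently many matrices'' verification the paper alludes to, only spelled out in more detail.
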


The polynomial $\Pf$ from Proposition \ref{Pf}, is defined only up to the sign. We make the following choice.

\begin{definition} \label{Pfd} For a skew-symmetric complex matrix \[ M = \left( m_{ij} \right)_{i,j =1,...,2n} \] we define the Pfaffian of $M$ as \[ \label{} \tag{2.34} \Pf(M) e_1 \wedge ... \wedge e_{2n} = \frac 1 {n!}\left( \sum\limits_{i<j}m_{ij}e_i \wedge e_j \right)^n\] where $e_i$ is the canonical basis of $\cc^{2n}$. 
\end{definition}

With this definition, the mentioned convention for tensors components and notation (\ref{of}), it follows immediately that writing the equation (\ref{qma}) in holomorphic coordinates gives the equation (\ref{cpe}). Note that the original equation (\ref{qma}) could have been rewritten in that way only because the associated hyperhermitian forms are of the Hodge type $(2,0)$. 

In deriving the estimates we will differentiate equation (\ref{cpe}) and in order to do that we would like to know the formula for the derivative of the Pfaffian. As we will be concerned only with the matrices with positive Pfaffian this can be derived from the more familiar formula for determinant derivatives coupled with Proposition \ref{Pf}. The result is as follows. 

\begin{lemma} Suppose $A = \big[ A_{ij}\big]_{i,j}$ is a complex skew-symmetric $2n \times 2n$ matrix with positive Pfaffian depending on variables $t$ and $s$. Its derivatives are given by   
\[\label{pfder} \tag{2.35} \frac{\pa}{\pa t} \Big( \log \Pf (A) \Big) = \frac{1}{2} \tr \Big( A^{-1} \frac{\pa}{\pa t} A \Big),  \]
\[\label{pfderder} \tag{2.36} \frac{\pa}{\pa s} \frac{\pa}{\pa t} \Big( \log \Pf (A) \Big) = \frac{1}{2} \bigg[ \tr \Big( A^{-1} \big( \frac{\pa}{\pa s} \frac{\pa}{\pa t} A\big) \Big) - \tr \Big( A^{-1} \big( \frac{\pa}{\pa s} A \big) A^{-1} \big( \frac{\pa}{\pa t} A\big) \Big) \bigg].\]
\end{lemma}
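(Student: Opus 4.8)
The plan is to derive the Pfaffian derivative formulas from the corresponding determinant formulas together with the square-root relation $\det = \Pf^2$ from Proposition \ref{Pf}. Since we only deal with skew-symmetric matrices of positive Pfaffian, on the connected open set of such matrices we have the unambiguous identity $\Pf(A) = \sqrt{\det A}$, hence $\log \Pf(A) = \tfrac{1}{2} \log \det A$, and both sides are smooth functions of the entries of $A$ (indeed of the parameters $t,s$ through $A$).

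First I would recall the classical Jacobi-type formulas for the determinant: for an invertible matrix $A(t)$ depending smoothly on $t$,
\[ \frac{\pa}{\pa t} \big( \log \det A \big) = \tr\Big( A^{-1} \frac{\pa}{\pa t} A \Big), \]
and differentiating once more, using $\frac{\pa}{\pa s}(A^{-1}) = - A^{-1} \big( \frac{\pa}{\pa s} A \big) A^{-1}$,
\[ \frac{\pa}{\pa s}\frac{\pa}{\pa t} \big( \log \det A \big) = \tr\Big( A^{-1} \frac{\pa}{\pa s}\frac{\pa}{\pa t} A \Big) - \tr\Big( A^{-1} \big( \frac{\pa}{\pa s} A\big) A^{-1} \big( \frac{\pa}{\pa t} A\big) \Big). \]
These are standard and need no reproof here. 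Dividing both identities by $2$ and invoking $\log \Pf(A) = \tfrac12 \log\det A$ gives precisely \eqref{pfder} and \eqref{pfderder}.

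The only genuine point to check is that $\Pf(A)$ is indeed the (positive) square root of $\det A$ on the relevant set, i.e. that the sign choice of Definition \ref{Pfd} is consistent as $A$ varies: this follows since $\Pf$ is a polynomial (hence continuous), $\det A = \Pf(A)^2 > 0$ forces $\Pf(A) \neq 0$ throughout, and on each connected component $\Pf(A)$ has constant sign; restricting to the component where $\Pf(A)>0$ identifies it with $+\sqrt{\det A}$. One then just needs that $A^{-1}$ and the derivatives $\frac{\pa}{\pa t}A$, $\frac{\pa}{\pa s}A$, $\frac{\pa}{\pa s}\frac{\pa}{\pa t}A$ make sense, which is immediate from invertibility ($\det A = \Pf(A)^2 \neq 0$) and the assumed smooth dependence on $t$ and $s$. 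I do not anticipate any real obstacle; the main (very minor) subtlety is bookkeeping the sign/branch of the square root, which is handled by the positivity hypothesis exactly as above.
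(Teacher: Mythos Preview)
Your proposal is correct and is exactly the approach the paper indicates: the paper does not spell out a proof but remarks just before the lemma that, since only matrices with positive Pfaffian are considered, the formulas ``can be derived from the more familiar formula for determinant derivatives coupled with Proposition \ref{Pf},'' which is precisely what you carry out.
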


\begin{remark}
From now on we assume that at the point of interest the holomorphic coordinates are chosen so that the hyperhermitian structure $(I,J,K,g)$ is standard, i.e. (\ref{standj}) and (\ref{diag}) hold, the form $\Omega_\phi:= \Omega + \pa \jpar \phi$ is diagonal, or equivalently the metric $g_\phi$ is and that (\ref{geodesic}) holds. Such an arrangement is possible because after choosing the geodesic coordinates we may make a linear change of those gaining the simultaneous diagonalization of both metrics. The vanishing of the Christoffel symbols is preserved under linear change of coordinates. From time to time we may refer to such a coordinates as canonical. 
\end{remark}

\section{$C^1$ estimate}
The goal of this section is to prove the following $C^1$ a priori estimate:
\begin{theorem}\label{c1b}
Let $(M,I,J,K,g)$ be a compact, connected hyperK\"ahler manifold. There exists a constant $C$ depending on $f$, $\sup_M |\phi|$ and the hyperhermitian structure $(I,J,K,g)$ such that for any solution $\phi$ of the equation (\ref{qma}) the estimate \[ \label{c1e} \tag{3.1} |d \phi|_g \leq C \] holds.
\end{theorem}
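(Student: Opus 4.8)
The plan is to follow the Błocki-type argument for the gradient estimate, adapted to the quaternionic Monge-Ampère operator (\ref{cpe}) on the hyperKähler manifold. I would consider the test function $u = \log |d\phi|_g^2 - A\phi$ for a suitably large constant $A$ depending only on the hyperKähler data and $\sup_M|\phi|$, and examine it at an interior maximum $p$ (the manifold being compact there is one). At $p$, the first-order condition gives an expression for the gradient of $\log|d\phi|_g^2$ in terms of $A\, d\phi$, and the second-order condition states that the complex Hessian of $u$ with respect to the linearized operator $L = \Omega_\phi^{i\bar j}\partial_i\partial_{\bar j}$ (or rather the Pfaffian-linearization, which by (\ref{pfder})--(\ref{pfderder}) is $\frac12\tr(A^{-1}\cdot)$ applied to the second derivative of $\Omega_\phi$) is non-positive. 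The whole computation should be carried out in $I$-holomorphic geodesic coordinates for the Obata connection centered at $p$, simultaneously diagonalizing $g$ and $g_\phi$, so that the Christoffel symbols and the first derivatives of $J$ vanish at $p$ by (\ref{geodesic}); this is the device that eliminates the troublesome $J$-dependence when differentiating (\ref{of}).

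The core of the argument is to compute $L(\log|d\phi|_g^2)$ at $p$. Writing $|d\phi|_g^2 = g^{i\bar j}\phi_i\phi_{\bar j}$ (plus the conjugate-type terms $g^{ij}$ etc.\ if one uses the full real gradient — in holomorphic coordinates this is $\sum \phi_i \phi_{\bar i}$ in canonical frame), one differentiates twice, using the differentiated equation (\ref{cpe}): applying $\partial_k$ to $\log\Pf[\Omega^\phi_{ij}] = f + \log\Pf[\Omega_{ij}]$ and using (\ref{pfder}) produces $\frac12\tr(A^{-1}\partial_k A) = f_k + (\text{bkgd})$, and a further application controls the third-order terms $\phi_{ij\bar k}$-type quantities. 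The hyperKähler hypothesis $d\Omega = 0$ is what makes the first derivatives of the metric coefficients $g_{i\bar j}$ (equivalently of $\Omega_{ij}$) vanish in the right coordinates, killing precisely the terms that in the general HKT case would obstruct the estimate. The positive terms arising from $L(|d\phi|_g^2)$ — essentially $\sum_{i,k}\Omega_\phi^{i\bar i}|\phi_{ik}|^2$ and $\sum \Omega_\phi^{i\bar i}|\phi_{i\bar k}|^2$ after dividing by $|d\phi|_g^2$ and subtracting the square of the gradient of the log (by Cauchy–Schwarz this subtraction is harmless) — must be shown to dominate the curvature error terms and the terms coming from differentiating $f$ and from $-A\phi$; here one uses $\tr(A^{-1}\cdot)$-type inequalities and the term $-A\,L\phi = -A(\text{something like } 2n - \tr_{g_\phi}\Omega)$, which is large and positive since $L\phi < 2n$ pointwise, to absorb everything when $A$ is large.

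Concretely the steps are: (1) set up $u = \log|d\phi|_g^2 - A\phi$, pick $p$ a maximum, and reduce to canonical coordinates at $p$; (2) write out $L u \le 0$ and $\nabla u = 0$ at $p$; (3) expand $L(\log|d\phi|_g^2)$, discarding first-derivatives-of-metric and first-derivatives-of-$J$ terms by hyperKählerity and the geodesic choice, and commuting derivatives at the cost of curvature-of-Obata terms; (4) insert the once- and twice-differentiated equation (\ref{cpe}) to rewrite third-order terms, and use Cauchy–Schwarz to cancel $|\nabla\log|d\phi|_g^2|^2$ against part of the positive second-order sum; (5) bound the remaining negative contributions by $C(1 + |d\phi|_g)$-type quantities times the positive sum and choose $A$ large to reach a contradiction unless $|d\phi|_g^2(p)$ is a priori bounded; (6) conclude $|d\phi|_g^2 \le e^{u(p) + A\phi} \le C$ globally.

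I expect the main obstacle to be step (3)–(4): controlling the third-order terms. In the quaternionic setting the Leibniz rule fails for the Cauchy–Riemann–Fueter-type operators and the mixed third derivatives of $\phi$ do not organize into a perfect square as cleanly as in the Kähler complex Monge-Ampère case; one has to be careful which combinations of $\phi_{ij\bar k}$ appear, use the symmetry relations forced by $\Omega_\phi$ being $J$-real and the relations (\ref{ahv})--(\ref{chrj}) for the derivatives of $J$, and check that after the geodesic simplification the residual terms genuinely have a sign or are absorbable. The curvature of the Obata connection will enter as the unavoidable "curvature condition" alluded to in the introduction, and verifying that the hyperKähler (rather than merely non-negative-bisectional-curvature) hypothesis suffices to make the error terms manageable is the delicate point.
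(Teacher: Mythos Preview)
Your overall setup is right --- the B{\l}ocki-style maximum principle on $\log\beta-\gamma\circ\phi$ in geodesic Obata coordinates is exactly how the paper proceeds, and you correctly identify that the hyperK\"ahler assumption kills the derivatives of $\Omega$ and of $J$. The gap is in your choice of the perturbation and in step~(4). With the \emph{linear} test function $u=\log|d\phi|_g^2-A\phi$ one has $\gamma'=A$, $\gamma''=0$, so after using the first-order condition at the maximum the term $-\tfrac{\partial\beta\wedge\partial_J\beta}{\beta^2}$ becomes $-A^2\,\partial\phi\wedge\partial_J\phi$ and produces a large \emph{negative} contribution $-A^2\sum_i|\phi_i|^2/\Omega^\phi_{2i\,2i+1}$. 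You propose to absorb this via Cauchy--Schwarz into the positive third-order sums $\sum|\phi_{jk}|^2/\Omega^\phi+\sum|\phi_{j\bar k}|^2/\Omega^\phi$; but this is precisely the step that fails in the quaternionic setting, as the paper explains in Remark~\ref{graddisc}. The obstruction is structural and present even in the flat case: because $\beta_p=\sum_i(\phi_{ip}\phi_{\bar i}+\phi_i\phi_{\bar ip})$, the Cauchy--Schwarz bound on $|\beta_p|^2/\beta$ picks up a factor of $2$, and the denominators in the resulting sums and in the positive terms from $\partial\partial_J\beta$ are indexed differently, so one cannot dominate the negative term. Choosing $A$ large only makes things worse, since the bad term scales as $A^2$.

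What the paper does instead is take $\gamma(t)=\tfrac12\log(2t+1)$, for which $(\gamma')^2+\gamma''=-\tfrac{1}{(2t+1)^2}<0$. After substituting the first-order conditions $\partial\beta/\beta=\gamma'\partial\phi$ and $\partial_J\beta/\beta=\gamma'\partial_J\phi$, the combined coefficient in front of $\partial\phi\wedge\partial_J\phi$ is $-\big((\gamma')^2+\gamma''\big)>0$, so this term is \emph{positive} and no Cauchy--Schwarz is needed at all; the third-order positive sums are simply discarded. One is then left with $C\geq C_1\sum|\phi_i|^2/\Omega^\phi_{2i\,2i+1}+C_2\sum 1/\Omega^\phi_{2i\,2i+1}$, from which two-sided bounds on the $\Omega^\phi_{2i\,2i+1}$ follow via the equation, and hence the bound on $\beta$. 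So your plan is salvageable, but you must replace the linear $-A\phi$ by a concave $\gamma$ satisfying $(\gamma')^2+\gamma''\le 0$ and $\gamma'>0$; then steps~(4)--(5) become unnecessary.
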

\begin{proof}
Let us define $\beta$ by \[ \label{grad} \tag{3.2} \beta \Omega^n = n \partial \phi \wedge \jpar \phi \wedge \Omega^{n-1}.\] It is easy to see, for example by rewriting this in canonical coordinates, that \[ \beta = \frac 1 4 |d\phi|_g^2 .\]
Since, after taking the logarithm, the linearization of the equation (\ref{qma}) is, up to the constant, the Chern Laplacian with respect to the hermitian structure $(I,g_\phi)$ on $M$, we note the useful form-type formula for this, cf. Proposition \ref{chernlapl}, in our setting \[ \tag{3.3} \label{} \pa \jpar f \wedge \Omega^{n-1}_\phi = \Big( \frac{1}{2n} \Delta^{Ch}_{I,g_\phi} f \Big)  \Omega^n_\phi.\]

Following Błocki, cf. \cite{B09}, we consider the quantity \[ \label{} \tag{3.4} \alpha = \log \beta - \gamma \circ \phi\] for a function $\gamma : \rr \rightarrow \rr$ to be specified below. 

All the computations from now on will be curried out at a maximum point of $\alpha$. As the operators $\pa$ and $\jpar$ are of pure first order we note that \[ \label{extrem} \tag{3.5} \pa \alpha = \frac{\pa \beta}{\beta} - \gamma' \pa \phi = 0, \]
\[\label{extrem'} \tag{3.6} \pa_J \alpha = \frac{\pa_J \beta}{\beta} - \gamma' \pa_J \phi = 0. \]

Furthermore
\[  \label{} \tag{3.7} \begin{gathered} \pa \jpar \alpha = \frac{\pa \jpar \beta}{\beta} - \frac{\pa \beta \wedge \jpar \beta}{\beta^2} - \gamma'' \pa \phi \wedge \jpar \phi - \gamma'\pa \jpar \phi \\ 
= \frac{\pa \jpar \beta}{\beta} - \frac{\pa \beta \wedge \jpar \beta}{\beta^2} - \gamma'' \pa \phi \wedge \jpar \phi - \gamma' \big(\Omega+ \pa \jpar \phi \big) + \gamma' \Omega \\
= \frac{\pa \jpar \beta}{\beta} - \big( (\gamma')^2 + \gamma'' \big) \pa \phi \wedge \jpar \phi - \gamma' \big(\Omega+ \pa \jpar \phi \big) + \gamma' \Omega. \end{gathered} \]

Taking the bar of (\ref{grad}) results in
\[\beta \overline{\Omega}^n = n \bpar \phi \wedge \bjpar \phi \wedge \overline{\Omega}^{n-1}.\]
Next, taking $\jpar$ of both sides, because of the hyperK\"ahler assumption, we get
\[\jpar \beta \wedge \overline{\Omega}^n = n \jpar \bpar \phi \wedge \bjpar \phi \wedge \overline{\Omega}^{n-1} - n \bpar \phi \wedge \jpar \bjpar \phi \wedge \overline{\Omega}^{n-1}\]
and by taking $\pa$, from the same reason as above, we end up with
\[\pa \jpar \beta \wedge \overline{\Omega}^n \]
\[ \label{} \tag{3.8} = n \pa \jpar \bpar \phi \wedge \bjpar \phi \wedge \overline{\Omega}^{n-1} + n \jpar \bpar \phi \wedge \pa \bjpar \phi \wedge \overline{\Omega}^{n-1} \]
\[ - n \pa \bpar \phi \wedge \jpar \bjpar \phi \wedge \overline{\Omega}^{n-1} + n \bpar \phi \wedge \pa \jpar \bjpar \phi \wedge \overline{\Omega}^{n-1}.\]

From the equation (\ref{qma})
\[\big( \Omega + \partial \jpar \phi \big)^n = e^f \Omega^n\]
by taking $\bpar$ and applying Lemma \ref{ac} we obtain 
\[n \pa \jpar \bpar \phi \wedge \Omega_\phi^{n-1} = \bpar e^f \wedge \Omega^n\]
while by taking $\bjpar$
\[n \pa \jpar \bjpar \phi \wedge \Omega_\phi^{n-1} = \bjpar e^f \wedge \Omega^n.\]
From this we obtain 
\[\pa \jpar \beta \wedge \Omega_\phi^{n-1}\wedge \bOmega^n \]
\[ \label{fourthgrad} \tag{3.9} = - \bjpar \phi \wedge \bpar e^f \wedge \Omega^n \wedge \bOmega^{n-1} + n \jpar \bpar \phi \wedge \pa 
\bjpar \phi \wedge \Omega_\phi^{n-1} \wedge  \overline{\Omega}^{n-1} \]
\[- n \pa \bpar \phi \wedge \jpar \bjpar \phi \wedge \Omega_\phi^{n-1} \wedge \overline{\Omega}^{n-1} + \bpar \phi \wedge \bjpar 
e^f \wedge \Omega^n \wedge \bOmega^{n-1}. \]

We now turn to evaluating the required quantities of second order present in the expression from (\ref{fourthgrad}). They are equal to

\[ \label{secondgrad1} \tag{3.10} \jpar \bpar \phi = \jpar \left(\phi_{\bar{j}} d\overline{z_j}\right) = J^{-1} \bpar \left(\phi_{\bar{j}} J d\overline{z_j}\right) = J^{-1} \left( \phi_{\bar{j} \bar{k}} d\overline{z_k} \wedge J d\overline{z_j}\right) = \phi_{\bar{j}\bar{k}} J^{-1}d\overline{z_k} \wedge d\overline{z_j},\]

\[ \label{secondgrad2} \tag{3.11} \pa \bjpar \phi = \pa J^{-1} \pa \phi = \pa J^{-1} \left( \phi_i d z_i \right) = \pa \left(\phi_i J^{-1} dz_i \right)= \phi_{ij} dz_j \wedge J^{-1} d z_i,\]

\[\label{secondgrad3} \tag{3.12} \pa \bpar \phi = \phi_{i \bar{j}} dz_i \wedge d\overline{z_j},\]

\[ \label{secondgrad4} \tag{3.13} \jpar \bjpar \phi = J^{-1} \bpar J J^{-1} \pa \phi = J^{-1} \bpar \pa \phi = \phi_{i \bar{j}} J^{-1}(d\overline{z_j}) \wedge J^{-1} dz_i.\]

At a maximum point of $\alpha$ we have
\[ \label{chernlaplaciangrad} \tag{3.14} \begin{gathered} 0 \geq \frac{1}{2n} \Delta^{Ch}_{I,g_\phi} \alpha = \frac{\pa \jpar \alpha \wedge \Omega_\phi^{n-1} \wedge \bOmega^n}{\Omega_\phi^n\wedge \bOmega^n} \\
 = \frac{\pa \jpar \beta \wedge \Omega_\phi^{n-1} \wedge \bOmega^n}{ \beta \Omega_\phi^n\wedge \bOmega^n}  - \big( (\gamma')^2 + \gamma'' \big) \frac{ \pa \phi \wedge \jpar \phi \wedge \Omega_\phi^{n-1} \wedge \bOmega^n}{ \Omega_\phi^n\wedge \bOmega^n} \\
 - \gamma'\frac{ \Omega_\phi^{n} \wedge \bOmega^n}{  \Omega_\phi^n\wedge \bOmega^n} + \gamma' \frac{\Omega \wedge \Omega_\phi^{n-1} \wedge \bOmega^n}{  \Omega_\phi^n\wedge \bOmega^n}. \end{gathered} \]

Applying (\ref{secondgrad1}) -- (\ref{secondgrad4}) in (\ref{fourthgrad}) and rewriting in coordinates gives us that those quantities are equal to

\[ \label{gradderdersec} \tag{3.15} \begin{gathered} \frac{\pa \jpar \beta \wedge \Omega_\phi^{n-1} \wedge \bOmega^n}{ \beta \Omega_\phi^n \wedge \bOmega^n} \\
 = \frac{1}{n} \frac{1}{ \beta} \Big( \frac{\phi_{\bar{i}} (e^f)_i}{e^f} +  \frac{ \phi_{i} (e^f)_{\bar{i}}}{e^f} + \frac{\phi_{\bar{j}\bar{k}} \phi_{jk}}{|\Omega^\phi_{k k+(-1)^k}|} + \frac{\phi_{i\bar{j}} \phi_{ \bar{i}j}}{|\Omega^\phi_{i i+(-1)^i}|} \Big), \end{gathered} \]

\[ \label{hmmm} \tag{3.16} - \big( (\gamma')^2 + \gamma'' \big) \frac{ \pa \phi \wedge \jpar \phi \wedge \Omega_\phi^{n-1} \wedge \bOmega^n}{ \Omega_\phi^n\wedge \bOmega^n}= - \frac 1 n \big( (\gamma')^2 + \gamma'' \big) \frac{\phi_i \phi_{\bar{i}}}{|\Omega^\phi_{i i+(-1)^i}|},\]

\[ \label{} \tag{3.17} - \gamma'\frac{ \Omega_\phi^{n} \wedge \bOmega^n}{ \Omega_\phi^n\wedge \bOmega^n} = -\gamma',\]

\[ \label{} \tag{3.18} \gamma' \frac{\Omega \wedge \Omega_\phi^{n-1} \wedge \bOmega^n}{  \Omega_\phi^n\wedge \bOmega^n} = \frac 1 n \gamma' \frac{1}{\Omega_{2i 2i+1}^{\phi}}.\]

We have thus obtained from (\ref{chernlaplaciangrad})

\[ \label{chlgloc} \tag{3.19} \begin{gathered} 0 \geq \frac{\phi_{\bar{i}} (e^f)_i}{\beta e^f} +  \frac{ \phi_{i} (e^f)_{\bar{i}}}{\beta e^f} + \frac{|\phi_{2i j}|^2 + |\phi_{2i+1 j}|^2}{ \beta \Omega^\phi_{2i 2i+1}} + \frac{|\phi_{2i \bar{j}}|^2 + |\phi_{2i+1 \bar{j}}|^2}{\beta \Omega^\phi_{2i 2i+1}}\\
 - \big( {\gamma'}^2 + \gamma'' \big) \frac{|\phi_{2i}|^2+ |\phi_{2i+1}|^2}{\Omega^\phi_{2i 2i+1}} - n \gamma' + \gamma' \frac{1}{\Omega^\phi_{2i 2i+1}}. \end{gathered} \]

Note that we may assume $\beta > 1$, otherwise we are finished. Under this assumption the first two terms in (\ref{chlgloc}) are bounded from below by a quantity not depending on $\phi$. The next two terms are positive and after fixing $\gamma$ the penultimate one is bounded from below as well. All of this allows us to rewrite the inequality (\ref{chlgloc}) as

\[ \label{somm} \tag{3.20} C(\gamma) \geq - \big( {\gamma'}^2 + \gamma'' \big) \frac{|\phi_{2i}|^2+ |\phi_{2i+1}|^2}{\Omega^\phi_{2i 2i+1}} + \gamma' \frac{1}{\Omega^\phi_{2i 2i+1}}. \] 

Now we can take, as in for example \cite{B09}, 
\[\label{gch} \tag{3.21} \gamma(t) = \frac{\log(2t+1)}{2}. \] 
Under this choice and the $C^0$ bound we have 

\[ \tag{3.22} \label{c2} C \geq C_1 \frac{|\phi_{2i}|^2+ |\phi_{2i+1}|^2}{\Omega^\phi_{2i 2i+1}} + C_2 \frac{1}{\Omega^\phi_{2i 2i+1}}.\]
From (\ref{c2}) we obtain for any fixed $j$ 
\[\Omega^\phi_{2j2j+1} \geq \frac{C_2}{C}\] which coupled with the equation (\ref{qma}) written in canonical coordinates as
\[ \label{} \tag{3.23}  \prod\limits_{i} \Omega_{2i2i+1}^\phi=e^f\]
gives 
\[ \label{newnew} \tag{3.24} \frac{1}{\Omega^\phi_{2j2j+1}} \geq \frac{C^{n-1}}{\sup_M e^f}.\]
Having the bound on $\Omega^\phi_{2i2i+1}$'s from (\ref{newnew}) we obtain the bound for $\beta$, from (\ref{c2}), at a maximum point of $\alpha$. This results in a uniform bound for $\beta$.
\end{proof}
\begin{remark} \label{graddisc}
The argument above corresponds to a gradient bound for the complex Monge-Amp\`ere equation with a background metric of non negative holomorphic bisectional curvature. Let us briefly discuss the problems in the argument above for general HKT metrics. Due to non vanishing of $d\Omega$ extra terms controlled by \[ -C\frac{1}{\Omega^\phi_{2i 2i+1}} \] appear. Hence $\gamma$ would have to
satisfy both \[ (\gamma')^2+\gamma''\geq 0 \text{ and } \gamma'>C \] which is possible only if the oscillation of $\phi$ is small compared to $C$. Furthermore the idea from \cite{B09} to exploit the terms containing squares of the pure second order derivatives does not seem to work.
This is partially explained by the fact that, even in the flat case, though the gradient can be written as \[\beta = \phi_{q_i} \phi_{\bar{q_i}}, \] its $q_j$'th derivative is not given by the Leibniz rule as this fails for the Cauchy-Riemann-Fueter operators (\ref{crf}) and (\ref{crfb}). What one obtains instead of $\phi_{q_i q_j} \phi_{\bar{q_i}}+\phi_{q_i } \phi_{\bar{q_i}q_j}$ are $\phi_{q_i } \phi_{\bar{q_i}q_j}$ and the conjugations of $\phi_{q_i q_j} \phi_{\bar{q_i}}$. Lack of orthogonality between these conjugates results in insufficient positivity to beat \[ -{\gamma'}^2 \frac{|\phi_{2i}|^2+ |\phi_{2i+1}|^2}{\Omega^\phi_{2i 2i+1}}\] - the main negative term.
\end{remark}

\section{Bound on $\partial \jpar \phi $}
In this section we bound partially the Hessian of $\phi$. More specifically we prove the following a priori estimate:

\begin{theorem}\label{le}
Let $(M,I,J,K,g)$ be a compact, connected hyperK\"ahler manifold. There exists a constant $C$ depending on $f$, $\sup_M |\phi|$ and the hyperhermitian structure $(I,J,K,g)$ such that for any solution $\phi$ of the equation (\ref{qma}) the estimate \[ \label{qhe} \tag{4.1} |\pa\jpar \phi|_g \leq C \] holds.
\end{theorem}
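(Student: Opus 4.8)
The plan is to run a Pogorelov-type maximum principle argument on a suitable test quantity built from $\partial\partial_J\phi$, carried out in the $I$-holomorphic geodesic coordinates for the Obata connection, so as to exploit $\nabla^{Ob}J=0$ (the vanishing (\ref{geodesic}) of the first derivatives of $J$ at the chosen point) together with the hyperK\"ahler condition $d\Omega=0$. Concretely, since in canonical coordinates $\partial\partial_J\phi$ is diagonal with entries $\Omega^\phi_{2i\,2i+1}-\Omega_{2i\,2i+1}$, bounding $|\partial\partial_J\phi|_g$ amounts to bounding $\tr_{g}(\Omega+\partial\partial_J\phi)$, equivalently $\sum_i \Omega^\phi_{2i\,2i+1}$, from above (the lower bound $\Omega^\phi_{2i\,2i+1}>0$ is already the positivity in (\ref{qma}), and combined with the equation $\prod_i\Omega^\phi_{2i\,2i+1}=e^f$ an upper bound on the trace gives a two-sided bound). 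I would therefore consider a quantity of the form
\[
\alpha = \log\big(\tr_g \Omega_\phi\big) - A\phi + \text{(possibly) a lower-order correction},
\]
or a more refined perturbation $\log(\tr_g\Omega_\phi) + h(|d\phi|^2_g) - A\phi$ as hinted in the introduction ("a more general perturbation than the classical one"), where $A$ is a large constant and $h$ is concave; then evaluate $\Delta^{Ch}_{I,g_\phi}\alpha\le 0$ at an interior maximum, using the form-type Chern-Laplacian identity $\partial\partial_J f\wedge\Omega^{n-1}_\phi=\tfrac{1}{2n}(\Delta^{Ch}_{I,g_\phi}f)\Omega^n_\phi$ exactly as in Section 3.

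The computational heart is to differentiate the equation (\ref{cpe}), i.e. $\log\Pf[\Omega^\phi_{ij}]=f+\log\Pf[\Omega_{ij}]$, twice, using the Pfaffian derivative formulas (\ref{pfder})–(\ref{pfderder}), and to commute the (real, i.e. $I$-holomorphic) derivatives $\partial,\bar\partial$ past $\partial_J$. Here the working in geodesic coordinates is what makes the argument tractable: the operators $\partial_J,\overline{\partial_J}$ are $J^{-1}\circ\bar\partial\circ J$ and $J^{-1}\circ\partial\circ J$, and when we differentiate $\Omega^\phi_{ij}=-(g_{i\bar k}+\phi_{i\bar k})J^{\bar k}_j+(i\leftrightarrow j)$ the terms involving $\partial J$ and $\bar\partial J$ drop at the point, while the hyperK\"ahler hypothesis $d\omega_I=d\omega_J=d\omega_K=0$ kills the first derivatives of $g_{i\bar j}$ as well (choosing the geodesic coordinates of $\nabla^{Ob}=\nabla^{LC}$). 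After applying $\Delta^{Ch}_{I,g_\phi}$ to $\log\tr_g\Omega_\phi$ one gets: a good positive "third-order square" term of the schematic shape $\sum (g_\phi^{-1})\,|\nabla\partial\partial_J\phi|^2 /\tr_g\Omega_\phi$; a bad third-order term of the same order coming from $-(\partial\beta_{\tr}\wedge\overline{\partial_J}\beta_{\tr})/(\tr_g\Omega_\phi)^2$ and from the Cauchy–Schwarz losses; a term $\sim -A\tr_g\Omega_\phi + An$ from $-A\Delta^{Ch}\phi$; and curvature-type terms in the second derivatives of $J$ at the point (these are genuine, second-order-in-$J$ and do not vanish in geodesic coordinates). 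The $-A\tr_g\Omega_\phi$ term, once $A$ is taken large relative to those $J$-curvature terms and relative to the already-known $C^0$ and $C^1$ bounds (Theorems \ref{mainthm}-input and \ref{c1b}), is what will dominate and force $\tr_g\Omega_\phi$ to be bounded at the maximum of $\alpha$, hence everywhere.

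The main obstacle — and the reason the hyperK\"ahler assumption is invoked rather than the general HKT case — is precisely the third-order terms, as flagged in Remark \ref{lapdisc} referenced in the introduction: the naturally appearing positive square term only compensates roughly half of the negative third-order term, because the "cross" derivatives produced by pushing $\partial$ past $\partial_J$ are not the conjugates of the ones appearing positively (the same failure-of-Leibniz/lack-of-orthogonality phenomenon described in Remark \ref{graddisc}). My strategy to get around this is twofold: first, use the hyperK\"ahler condition to eliminate all the first-order-metric terms, so that the only surviving "extra" third-order contributions are those intrinsic to differentiating $J$, not $g$; and second, choose the perturbation term in $\alpha$ (the concave function of $|d\phi|_g^2$, controlled already by Theorem \ref{c1b}) so that the extra negative third-order term it generates can be arranged, via a careful Cauchy–Schwarz with a free parameter, to be absorbed together with the leftover half into the good square term — i.e. one does not try to beat the negative third-order term head-on but rather shows it can be dropped after the perturbation, leaving only the manageable $-A\tr_g\Omega_\phi$ mechanism. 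Verifying that the bookkeeping of these third-order terms closes — with all constants depending only on $f$, $\sup_M|\phi|$, the $C^1$ bound, and the fixed hyperhermitian data $(I,J,K,g)$ — is the delicate part of the proof.
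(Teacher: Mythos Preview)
Your proposal misses the paper's central trick and instead pursues exactly the route that Remark~\ref{lapdisc} flags as failing. The paper does \emph{not} attempt to absorb the bad third-order term $-\dfrac{\partial\eta\wedge\partial_J\eta}{\eta^2}$ into the positive third-order square coming from differentiating the equation, nor does it introduce any gradient perturbation $h(|d\phi|_g^2)$ here. Instead, it takes the test function $\alpha=\log\eta-\gamma\circ\phi$ with a genuinely \emph{concave} $\gamma(t)=\tfrac12\log(2t+1)$ (not a linear $-A\phi$), and uses the critical-point conditions $\partial\eta/\eta=\gamma'\partial\phi$, $\partial_J\eta/\eta=\gamma'\partial_J\phi$ to \emph{convert} the bad third-order term into the first-order quantity $-(\gamma')^2\,\partial\phi\wedge\partial_J\phi$. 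Combined with the $-\gamma''\partial\phi\wedge\partial_J\phi$ from the chain rule, the coefficient becomes $-\big((\gamma')^2+\gamma''\big)>0$ for this choice of $\gamma$, so the term is actually good. The positive third-order square (their (4.16)) is then simply discarded, yielding the clean inequality $C\ge C_1\sum\frac{|\phi_{2i}|^2+|\phi_{2i+1}|^2}{\Omega^\phi_{2i\,2i+1}}+C_2\sum\frac{1}{\Omega^\phi_{2i\,2i+1}}$, from which the trace bound follows via the equation.

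Your linear choice $-A\phi$ would give $\gamma''=0$ and hence a term $-A^2\sum|\phi_i|^2/\Omega^\phi_{2i\,2i+1}$ with the \emph{wrong} sign; and your fallback plan of absorbing the raw $-|\nabla\eta|_\phi^2/\eta^2$ into the good square via Cauchy--Schwarz is precisely what (4.16) shows cannot close, since the good term is only half as large. Two smaller points: the contribution of $-A\,\Delta^{Ch}_{I,g_\phi}\phi$ is $-nA+A\sum 1/\Omega^\phi_{2i\,2i+1}$, not $-A\,\tr_g\Omega_\phi$ as you wrote; and the paper's Laplacian estimate does \emph{not} use the $C^1$ bound at all --- the constant depends only on $f$ and $\sup_M|\phi|$, as stated. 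The ``more general perturbation'' alluded to in the introduction is the concave $\gamma(\phi)$, not a gradient term; the gradient term appears only later, in the full $C^2$ estimate of Section~5.
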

\begin{proof}
Let us define this time \[ \tag{4.2} \label{betlap} \eta \Omega^n = \Omega_\phi \wedge \Omega^{n-1} \] and consider the quantity \[ \alpha = \log \eta - \gamma \circ \phi, \] where the 
function $\gamma$ is as in the previous section, cf. (\ref{gch}). We note that in order to obtain (\ref{qhe}) it is sufficient to bound $\eta$ from above, at a maximal point of $\alpha$, as it is a positive quantity due to \[ \label{positive} \tag{4.3} \Omega_\phi > 0. \] This truly implies (\ref{qhe}) as in the canonical coordinates \[ \eta = \frac 1 n \big( \phi_{i \bar{i}} + n \big) \] is the constant plus the sum of the coefficients of $\pa \jpar \phi$ which in light of (\ref{positive}) are bounded from below.
  
We note that at a maximum point of $\alpha$
\[ \label{alhess} \tag{4.4} \begin{gathered} \partial \jpar \alpha = \frac{\partial \jpar \eta}{\eta} - \frac{ \pa 
\eta \wedge \jpar \eta}{\eta^2} - \gamma'' \partial \phi \wedge \jpar \phi - \gamma' \pa \jpar \phi \\
= \frac{\partial \jpar \eta}{\eta} - \Big( \big( \gamma' \big)^2 + \gamma'' \Big) \pa \phi \wedge \jpar \phi - \gamma' \Omega_\phi + \gamma' \Omega  \end{gathered} \] Here we have used the fact that at the extremal point 
\[\frac{\partial \eta}{\eta} = \gamma' \partial \phi, \]
\[\frac{\jpar \eta}{\eta} = \gamma' \jpar \phi .\]

Let us focus for the moment on the term $\partial \jpar \eta$ appearing in (\ref{alhess}). Differentiating twice the conjugation of the relation (\ref{betlap}) (recall that we work under the assumption $d \Omega = 0$) we obtain 

\[ \label{bhess} \tag{4.5} \pa \jpar \eta \wedge \Omega_\phi^{n-1} \wedge \bOmega^n = \pa \jpar \bpar \bjpar \phi \wedge \Omega_\phi^{n-1} \wedge \bOmega^{n-1}.\]
An easy calculation in the canonical coordinates shows that 
\[ \label{phifourth} \tag{4.6} \begin{gathered} \pa \jpar \bpar \bjpar \phi = \pa J^{-1} \bpar J \bpar J^{-1} \pa J \phi = \pa J^{-1} \bpar J \bpar J^{-1} \phi_i dz_i = \pa J^{-1} \bpar \phi_{i \bar
{j}} J d \overline{z_j} \wedge dz_i \\
 = \pa \phi_{i \overline{j} \overline{k}}  J^{-1} d \overline{z_k} \wedge d \overline{z_j} \wedge J^{-1} dz_i \\
 =  \phi_{i \overline{j} \overline{k} l}  dz_l \wedge J^{-1} d \overline{z_k} \wedge d \overline{z_j} \wedge J^{-1} dz_i \\
 = \Big(\phi_{i \overline{j} \overline{k} l} d \overline{z_j} \wedge J^{-1} dz_i \Big) \wedge  dz_l \wedge J^{-1} d \overline{z_k}. \end{gathered} \] 

Here we have used the relation (\ref{ahv}) and its conjugation \[ \begin{gathered} \bpar J^{-1} dz_i = \bpar J^{-1} \pa J z_i = \bpar \bjpar z_i = - \bjpar \bpar z_i = 0,\\
 \pa J^{-1} \bpar J d \overline{z_j} = \pa \jpar \bpar \overline{z_j} = \jpar \bpar \pa \overline{z_j} = 0 \end{gathered} \] and the fact that the first derivatives of the components of $J$ vanish at the point, cf. (\ref{geodesic}).

Formulas (\ref{bhess}) and (\ref{phifourth}) allows us to conclude that
\[ \label{betahessexpl} \tag{4.7} \frac{\pa \jpar \eta \wedge \Omega_\phi^{n-1}\wedge \bOmega^n}{\eta \Omega_\phi^n \wedge \bOmega^n}=\frac{\pa \jpar \bpar \bjpar \phi \wedge \Omega_\phi^{n-1} \wedge \bOmega^{n-1}}{ \eta \Omega_\phi^n \wedge \bOmega^n} = \frac{1}{\eta n^2}\sum_{l=0}^{n-1} \frac{\sum_{i=0}^{2n-1} \big( \phi_{i \overline{i} \overline{2l} 2l} + \phi_{i \overline{i} \overline{2l+1} 2l+1} \big)}{\Omega^{\phi}_{2l 2l+1}}. \]

Now we find another expression for the last quantity in (\ref{betahessexpl}). Recall that the equation (\ref{qma}) can be written in the form (\ref{cpe})
\[Pf(\Omega_{ij}^\phi) = e^f Pf(\Omega_{ij}). \] 
After taking the logarithm this reads
\[\label{logcpe} \tag{4.8} \log Pf(\Omega_{ij}^\phi) = f + \log Pf(\Omega_{ij}).\] Differentiating (\ref{logcpe}) once provides, due to (\ref{pfder}),
\[ \label{logcpeder} \tag{4.9} \frac 1 2 tr \Big( (\Omega^{ij}_\phi )(\Omega_{ij, \bar{p}}^\phi ) \Big) = f_{\bar{p}}. \] because, due to the hyperK\"ahler assumption, 
\[ \label{} \tag{4.10} 0= \bpar \Omega^n = n! Pf(\Omega_{ij})_{\bar{p}} d\overline{z_p} \wedge dz_0 \wedge ... \wedge dz_{2n-1}. \] In particular the first barred derivatives vanish locally and  not only at the fixed point. Differentiating (\ref{logcpeder}) once more, due to the formula (\ref{pfderder}), yields 
\[ \label{logcpederder} \tag{4.11} \frac 1 2 tr \Big( (\Omega^{ij}_\phi )(\Omega_{ij, \bar{p}p}^\phi ) \Big) - \frac 1 2 tr \Big( (\Omega^{ij}_\phi )(\Omega_{ij, \bar{p}}^\phi ) (\Omega^{ij}_\phi )(\Omega_{ij,p}^\phi )\Big) = f_{\bar{p}p}.\]

Summing, over $p$, the formulas (\ref{logcpederder}) give us (recall that $\big[\Omega_{ij}^\phi\big]_{i,j}$ is block diagonal)
\[ \label{fourthfinal} \tag{4.12} \begin{gathered} \frac{\sum_p \Omega^\phi_{2i2i+1,p \bar{p}}}{\Omega_{2i2i+1}^\phi} 
= \frac{1}{2} \Delta^{Ch}_{I,g} f + \frac 1 2 \Omega_\phi^{ka}\Omega^\phi_{al,p}\Omega_\phi^{lb}\Omega^\phi_{bk,\bar{p}}
 = \frac{1}{2} \Delta^{Ch}_{I,g} f + \\
\frac{1}{2} \frac{ \Omega_{2k+1 2l, p}^\phi \Omega_{2l+1 2k, \bar{p}}^\phi 
+ \Omega_{2k2l+1, p}^\phi \Omega_{2l2k+1, \bar{p}}^\phi 
- \Omega_{2k+12l+1, p}^\phi \Omega_{2l2k, \bar{p}}^\phi
- \Omega_{2k2l, p}^\phi\Omega_{2l+1 2k+1, \bar{p}}^\phi}{\Omega_{2k2k+1}^\phi \Omega_{2l2l+1}^\phi}. \end{gathered} \]

Using the hyperK\"ahler assumption and (\ref{ahv}) as well as (\ref{geodesic}) we obtain the formula
\[ \label{ofder} \tag{4.13} \begin{gathered} \sum\limits_{k <l} \Omega^\phi_{kl,\bar{p}} d \overline{z_p} \wedge dz_k \wedge dz_l = \bpar  \sum\limits_{k <l}  \Omega^\phi_{kl} dz_k \wedge dz_l = \bpar \Omega_\phi = \bpar \pa \jpar \phi \\
 = \bpar \sum\limits_{k,l} \phi_{k \bar{l}} dz_k \wedge J^{-1} d \overline{z_l} =  \sum\limits_{k,l} \phi_{k \bar{l} \bar{p}} d \overline{z_p} \wedge dz_k \wedge J^{-1} d \overline{z_l} \end{gathered} \]

and similarly 

\[ \label{ofderb} \tag{4.14} \begin{gathered} \sum\limits_{k < l} \Omega^\phi_{kl, p} J^{-1} d z_p \wedge dz_k \wedge dz_l = \bjpar  \sum\limits_{k <l}  \Omega^\phi_{kl} dz_k \wedge dz_l = \bjpar \Omega_\phi = \bjpar \pa \jpar \phi \\
 = \bjpar \sum\limits_{k,l} \phi_{k \bar{l}} dz_k \wedge J^{-1} d \overline{z_l} =  \sum\limits_{k,l} \phi_{k \bar{l} p} J^{-1} d z_p \wedge dz_k \wedge J^{-1} d \overline{z_l}. \end{gathered} \]

From (\ref{ofder}) and (\ref{ofderb}) we obtain that, for any $k$, $l$ and $p$,

\[ \label{formulas} \tag{4.15} \begin{gathered} \Omega^\phi_{2k+1 2l, p} = - \phi_{2k+1 \overline{2l+1} p} - \phi_{2l \overline{2k} p}, \\
 \Omega^\phi_{2l+1 2k, \bar{p}} = - \phi_{2l+1 \overline{2k+1} \bar{p}} - \phi_{2k \overline{2l} \bar{p}}, \\
 \Omega^\phi_{2k+1 2l+1, p} = \phi_{2k+1 \overline{2l} p} - \phi_{2l+1 \overline{2k} p}, \\
 \Omega^\phi_{2l+1 2k+1, \bar{p}} = \phi_{2k+1 \overline{2l} \bar{p}} - \phi_{2l+1 \overline{2k} \bar{p}}, \\
 \Omega^\phi_{2k 2l, p} = \phi_{2k \overline{2l+1} p} - \phi_{2l \overline{2k+1} p}, \\
 \Omega^\phi_{2l 2k, \bar{p}} = \phi_{2k \overline{2l+1} \bar{p}} - \phi_{2l \overline{2k+1} \bar{p}}. \end{gathered}\]

This gives the expression in terms of derivatives of $\phi$ for the  fourth order component obtained in (\ref{fourthfinal})  
\[ \label{fourthpositive} \tag{4.16} \begin{gathered} \frac{1}{2} \frac{ \Omega_{2k+1 2l, p}^\phi \Omega_{2l+1 2k, \bar{p}}^\phi 
+ \Omega_{2k2l+1, p}^\phi \Omega_{2l2k+1, \bar{p}}^\phi 
- \Omega_{2k+12l+1, p}^\phi \Omega_{2l2k, \bar{p}}^\phi
- \Omega_{2k2l, p}^\phi\Omega_{2l+1 2k+1, \bar{p}}^\phi}{\Omega_{2k2k+1}^\phi \Omega_{2l2l+1}^\phi} \\
 = \frac{1}{2} \frac{ |\phi_{2k+1 \overline{2l+1} p} + \phi_{2l \overline{2k} p}|^2 
+ |\phi_{2l+1 \overline{2k+1} p} + \phi_{2k \overline{2l} p}|^2}{\Omega_{2k2k+1}^\phi \Omega_{2l2l+1}^\phi} \\
+\frac 1 2 \frac{ |\phi_{2k+1 \overline{2l} p} - \phi_{2l+1 \overline{2k} p}|^2
+ |\phi_{2k \overline{2l+1} p} - \phi_{2l \overline{2k+1} p}|^2}{\Omega_{2k2k+1}^\phi \Omega_{2l2l+1}^\phi}. \end{gathered}\]

From (\ref{fourthpositive}) we see that the quantity from (\ref{fourthfinal}) satisfies

\[ \label{prelimbound} \tag{4.17} \frac{\sum_p \Omega^\phi_{2i2i+1,p \bar{p}}}{\Omega_{2i2i+1}^\phi} 
\geq \frac{1}{2} \Delta^{Ch}_{I,g} f \geq - C(f). \]

Finally observe that from (\ref{phifourth}), much like in (\ref{ofder}) and (\ref{ofderb}), it is easy to see that 
\[\label{fourthexchange} \tag{4.18}  \phi_{i \overline{i} \overline{2l} 2l} + \phi_{i \overline{i} \overline{2l+1} 2l+1} = \Omega^\phi_{2l2l+1,i \bar{i}}\] for any $i,l$. 

Having this we return to the estimation of $\Omega^\phi_{ij}$'s. At a maximum point of $\alpha$ we have 

\[ \label{maxalphlap} \tag{4.19} \begin{gathered} 0 \geq \frac{1}{2n} \Delta^{Ch}_{I,g_\phi} \alpha = \frac{\pa \jpar \alpha \wedge \Omega_\phi^{n-1} \wedge \bOmega^n}{\Omega_\phi^n\wedge \bOmega^n} \\
= \frac{\pa \jpar \eta \wedge \Omega_\phi^{n-1} \wedge \bOmega^n}{ \eta \Omega_\phi^n\wedge \bOmega^n}  - \big( (\gamma')^2 + \gamma'' \big) \frac{ \pa \phi \wedge \jpar \phi \wedge \Omega_\phi^{n-1} \wedge \bOmega^n}{ \Omega_\phi^n\wedge \bOmega^n} \\
- \gamma'\frac{ \Omega_\phi^{n} \wedge \bOmega^n}{  \Omega_\phi^n\wedge \bOmega^n} + \gamma' \frac{\Omega \wedge \Omega_\phi^{n-1} \wedge \bOmega^n}{  \Omega_\phi^n\wedge \bOmega^n}. \end{gathered} \]

We may assume $\eta > 1$, otherwise we are done. By (\ref{betahessexpl}), (\ref{fourthexchange}) and (\ref{prelimbound}) the first term on the right hand side of (\ref{maxalphlap}) is bounded from below. The same holds for the third term of (\ref{maxalphlap}) as $\gamma$ is chosen as in (\ref{gch}). 
This choice of $\gamma$ ensures also the positivity of the coefficients of the two remaining terms on the right hand side of (\ref{maxalphlap}). This means we can rewrite the inequality (\ref{maxalphlap}) as

\[ \tag{4.20} \label{c1} C \geq C_1 \frac{|\phi_{2i}|^2+ |\phi_{2i+1}|^2}{\Omega^\phi_{2i 2i+1}} + C_2 \frac{1}{\Omega^\phi_{2i 2i+1}}\]

for positive constants $C_1$, $C_2$. This allows us, as in the previous section, to obtain the bounds on $\Omega^\phi_{2i 2i+1}$'s at a maximum point of $\alpha$. This in turn gives us a bound on $\eta$, which is a multiple of the sum of $\Omega^\phi_{2i 2i+1}$'s, at a maximum point of $\alpha$ which in turn yields the uniform bound on $\eta$ itself.
\end{proof}
\begin{remark}\label{lapdisc}
Let us note that for the general HKT metric  the presence of terms coming from differentiating $\Omega$ in (\ref{alhess}) significantly complicates the computations.
 Instead, one has to bound the term \[\frac{ \pa \eta \wedge \jpar \eta}{\eta^2}\] in (\ref{alhess}). Unfortunately as one can easily see the quantity 
 \[ \frac{ \pa \eta \wedge \jpar \eta \wedge \Omega_\phi^{n-1} \wedge \bOmega^n}{\eta^2 \Omega^n \wedge \bOmega^n} = \frac{1}{\eta^2 n^3} \sum_i \frac{|\phi_{j \bar{j} i}|^2}{|\Omega_{ii+(-1)^i}^\phi|}\] is only bounded by twice the quantity (\ref{betahessexpl}) as can bee seen from what we have obtained in (\ref{fourthpositive}).  
\end{remark}
\section{Full $C^2$ estimate}

This section fully exploits the fact that under the assumptions of Theorem \ref{mainthm} \[ \label{} \tag{5.1} \nabla:= \ob = \nabla^{LC} = \nabla^{Ch}_{I,g}. \]
As we shall see this coupled with the previous a priori bounds suffices to bound the full Hessian of $\phi$. The $C^2$ a priori estimate reads as follows:
\begin{theorem}\label{c2b}
Let $(M,I,J,K,g)$ be a compact, connected hyperK\"ahler manifold. There exists a constant $C$ depending on $f$, $\sup_M |\phi|$ and the hyperhermitian structure $(I,J,K,g)$ such that for any solution $\phi$ of the equation (\ref{qma}) the estimate \[ \label{fhe} \tag{5.2} | \nabla^2 \phi|_g \leq C \] holds.
\end{theorem}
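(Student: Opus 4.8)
The plan is to combine the partial Hessian bound of Theorem \ref{le} with the gradient bound of Theorem \ref{c1b} via a standard Błocki-type maximum principle argument (cf. \cite{Bnotes}), crucially exploiting the coincidence $\ob=\nabla^{LC}=\nabla^{Ch}_{I,g_\phi}$ that holds under the hyperK\"ahler assumption. Since $|\nabla^2\phi|_g$ is a Riemannian quantity controlled by the largest eigenvalue of the real Hessian $(\nabla^{LC})^2\phi$ (together with the already-bounded first derivatives), it suffices to bound from above the quantity
\[
\lambda_{\max}(x)=\sup_{|\xi|_g=1}\big(\nabla^2\phi\big)(\xi,\xi),
\]
or a smooth proxy for it. Following \cite{Bnotes}, I would fix a point, diagonalise, and consider a test function of the form
\[
\Psi=\log\lambda_{\max}+h(|d\phi|_g^2)-\gamma\circ\phi,
\]
where $h$ is a suitable increasing concave (or convex, depending on signs) auxiliary function and $\gamma$ is of the type used in Sections 3--4, cf. (\ref{gch}). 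At an interior maximum of $\Psi$ one differentiates twice in the direction realising $\lambda_{\max}$ and applies the linearised operator, which — after taking logarithms in (\ref{cpe}) — is (up to a positive constant) the Chern Laplacian $\Delta^{Ch}_{I,g_\phi}$ acting as in (\ref{betahessexpl}).

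The key simplification, and the reason the hyperK\"ahler hypothesis enters, is that in geodesic coordinates for $\ob$ the Christoffel symbols vanish at the point and the first derivatives of $J$ vanish there, cf. (\ref{geodesic}); moreover $\ob=\nabla^{LC}$, so the real Hessian $(\nabla^{LC})^2\phi$ is, at that point, simply the matrix of second partial derivatives, and commuting the $I$-holomorphic derivatives with $\ob$ produces only curvature terms of the fixed background metric $g$ — no terms involving derivatives of the metric coefficients $g_{i\bar j}$ or of $J$. Differentiating the equation (\ref{cpe}) twice in a fixed real (or holomorphic) direction and contracting with $(\Omega_\phi^{ij})$, exactly as in (\ref{logcpederder})--(\ref{fourthpositive}), produces a \emph{positive} third-order term (sums of squares of third derivatives of $\phi$), a background-curvature term bounded by $C(1+\eta)$ where $\eta$ is controlled by Theorem \ref{le}, and the term $\Delta^{Ch}_{I,g_\phi}f$ which is bounded by $C(f)(1+\eta)$. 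The gradient term $h(|d\phi|^2)$ is handled as in Yau's and Błocki's classical computation: it generates a term $\gtrsim h'\sum_i \Omega^{\phi,ii}$ that dominates the bad contributions coming from the commutator/curvature terms once $h$ is chosen with $h'$ large relative to the (now bounded, by Theorem \ref{c1b}) gradient, while the $-\gamma\circ\phi$ term, upon using $\Omega_\phi>0$ and $\prod_i\Omega^\phi_{2i2i+1}=e^f$, yields a term $\gtrsim \gamma'\sum_i\Omega^{\phi,ii}$ that together with the already-available lower bounds on the $\Omega^\phi_{2i2i+1}$ (coming, as in (\ref{newnew}), from the $C^0$ bound and the equation) closes the estimate and bounds $\lambda_{\max}$ at the maximum point of $\Psi$, hence everywhere.

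The main obstacle I expect is the third-order terms, exactly as flagged in Remark \ref{lapdisc}: the negative third-order contribution arising from differentiating $(\Omega_\phi^{ij})$ in $\frac{\partial}{\partial s}\frac{\partial}{\partial t}\log\Pf$ (the $-\tr(A^{-1}\partial_s A\,A^{-1}\partial_t A)$ term of (\ref{pfderder}), here with $\partial_t,\partial_s$ replaced by two copies of the direction realising $\lambda_{\max}$) must be shown to be absorbed by the positive squared third-order term of the type (\ref{fourthpositive}) together with the positive contribution from the $h(|d\phi|^2)$ perturbation; as the authors note, in the general HKT case this positive term "compensates only half of the negative term," so the argument must use that the hyperK\"ahler condition removes the metric-derivative contamination and that the Błocki auxiliary function $h$ supplies the missing positivity. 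Once that cancellation is arranged, the remainder of the argument is routine: one feeds the resulting pointwise bound on $\lambda_{\max}$, together with $\sup_M|\phi|$ and the gradient bound, back to conclude $|\nabla^2\phi|_g\le C$ uniformly on $M$.
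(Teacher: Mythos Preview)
Your overall strategy --- a B\l{}ocki-type maximum principle applied to a function built from $\lambda_{\max}$ and $|d\phi|^2$ --- is the right one and is what the paper does. However, your test function and your description of the closing mechanism are both off, and the mismatch is not cosmetic.

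The paper uses the much simpler quantity $\tilde\alpha = \lambda_{\max} + \tfrac14|d\phi|_g^2$ (no logarithm, no $-\gamma\circ\phi$). The mechanism that closes the estimate is \emph{not} that $h(|d\phi|^2)$ or $-\gamma\circ\phi$ produces a term $\gtrsim\sum_i(\Omega^\phi_{2i\,2i+1})^{-1}$; in the hyperK\"ahler setting the metric-derivative contributions that would produce such a term vanish, and in any case the two-sided bound $C^{-1}\le\Omega^\phi_{2i\,2i+1}\le C$ is \emph{already} available from Theorem~\ref{le}, so $\sum_i(\Omega^\phi_{2i\,2i+1})^{-1}$ is merely bounded and cannot be used to dominate anything. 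What actually happens is: (i) applying $\Delta^{Ch}_{I,g_\phi}$ to $\lambda_{\max}$ and differentiating the equation twice in the direction $X$ gives, after using the two-sided bound on $\Omega^\phi_{2i\,2i+1}$, a lower bound of the form $-C(\lambda_{\max}+1)$, with the third-order term from (\ref{pfderder}) coming out as a \emph{nonnegative} sum of squares exactly as in (\ref{fourthpositive}) and simply discarded; (ii) applying $\Delta^{Ch}_{I,g_\phi}$ to $|d\phi|^2$ reproduces the computation (\ref{gradderdersec}) and yields $\sum_{i,j}(|\phi_{ij}|^2+|\phi_{i\bar j}|^2)/\Omega^\phi\ge C\lambda_{\max}^2$. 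The inequality $0\ge C\lambda_{\max}^2-C'\lambda_{\max}-C''$ then bounds $\lambda_{\max}$.

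Your anticipated obstacle --- a negative third-order term that must be absorbed --- does not arise in the paper's argument precisely because there is no $\log$ on $\lambda_{\max}$: the only third-order contribution comes from the twice-differentiated equation and is nonnegative. By inserting $\log\lambda_{\max}$ you create the extra term $-|\nabla\lambda_{\max}|^2/\lambda_{\max}^2$, which is exactly the problematic third-order quantity you then worry about; it can be handled via the critical-point relation, but this is an unnecessary complication. Likewise the $-\gamma\circ\phi$ term is redundant once Theorem~\ref{le} is in hand. In short: drop the logarithm and the $\gamma$-perturbation, use $\lambda_{\max}+\tfrac14|d\phi|^2$ directly, and recognise that the decisive positive term is the \emph{quadratic} $\lambda_{\max}^2$ coming from the squared second derivatives in $\Delta^{Ch}_{I,g_\phi}|d\phi|^2$, not a $\sum(\Omega^\phi)^{-1}$ term.
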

\begin{proof}
We wish to estimate the quantity $\theta$ being, this time, defined for any $x \in M$ as
\[ \label{lamax} \tag{5.3} \theta(x) = \lambda_{max}(x) := \sup\limits_{X \in T_x M, \: |X|_g = 1} g(\nabla_X \nabla \phi,X) =  \sup\limits_{X \in T_x M, \: |X|_g = 1} \big(\nabla^2 \phi \big) (X,X)\]
as was done originally in \cite{Bnotes} in the case of the complex Monge-Amp\`ere equation on K\"ahler manifolds. This is sufficient for the bound on the full Hessian (\ref{fhe}) because $\theta$ is the maximum eigenvalue of the Hessian at the point $x$. More precisely, the sum of all the eigenvalues, being the Laplacian, is bounded from below by 
\[ \label{lbl} \tag{5.4} \frac{1}{2n} \Delta^{Ch}_{I,g} \phi \geq - \frac{\Omega \wedge \Omega^{n-1}}{\Omega^n} = -1\] 
as $\Omega_\phi > 0 $. Once the sum is under control from below and $\lambda_{max}$ is bounded from above we obtain the lower bound for the smallest eigenvalue, $\lambda_{min}$, and consequently we get both sided bounds for all the entries of the matrix of $\nabla^2 \phi$.

Consider the quantity $\alpha$ this time given by 
\[ \label{prelimal} \tag{5.5} \alpha = \theta + \frac 1 4 |d\phi|_g^2. \]

Since we obtained the gradient bound in Theorem \ref{c1b} it is enough to estimate $\alpha$ at a maximum point $p \in M$.

In this section it will be customary to introduce also the real coordinates \[\label{realcoord} \tag{5.6} z_i = t_i + \ii t_{2n+i} \] for $i=0,...,2n-1$, different from the one introduced in (\ref{rc}) as can be seen from Remark \ref{ccvrc}.

As the quantity (\ref{prelimal}) is in general non smooth, due to (\ref{lamax}) not being smooth, we extend a fixed vector \[ \label{} \tag{5.7} X = X^j \partial_j(p) \in T_p M,\] realizing the supremum in the definition of $\theta(p)$, to a constant coefficient local
vector field \[ \label{} \tag{5.8} X = X^j \partial_j.\] This $X$ is fixed for the rest of the proof. Consider instead of (\ref{prelimal}) the quantity \[ \label{al} \tag{5.9} \tilde{\alpha} = \frac{\tilde{\theta}}{|X|_g^2} + \frac 1 4 |d\phi|_g^2,\] where 
\[ \tag{5.10} \label{lamaxmod} \tilde{\theta} = \big( \nabla^2 \phi \big) (X,X). \] 

Observe that \[\label{lmvlm} \tag{5.11} \frac{\tilde{\theta}}{|X|_g^2} \leq \theta \] and \[ \label{lmvlm'} \tag{5.12} \tilde{\theta}(p) = \theta(p)\] which means that also the quantity (\ref{al}) attains a maximum at $p$. We may assume that
\[ \label{} \tag{5.13} \theta(p) \geq 0\] since otherwise we are done.

We have the following expression in the introduced coordinates (\ref{realcoord})
\[ \label{hessexpre} \tag{5.14} \nabla^2 \phi = \nabla \phi_{t_j} dt_j = \phi_{t_i t_j} dt_i \otimes dt_j - \Gamma_{ik}^j \phi_{t_j} dt_i \otimes dt_k,\] where $\Gamma_{ji}^k$ are Christoffel symbols in the real frame $\partial_{t_i}$. From (\ref{hessexpre}) we find that 
\[ \label{lamaxmodexpre} \tag{5.15} \tilde{\theta} = D^2_X \phi - \Gamma_{ik}^j \phi_{t_j} X^i X^k, \] 
where $D$ denotes the flat connection in coordinates (\ref{realcoord}). Our goal is to exploit the estimate 
\[ \label{lapineq} \tag{5.16} \begin{gathered} 0 \geq \frac{1}{2n} \Delta^{Ch}_{I,g_\phi} \tilde{\alpha} \\ = 
\frac{\pa \jpar \tilde{\theta} \wedge \Omega_\phi^{n-1} \wedge \bOmega^n}{\Omega_\phi^n \wedge \bOmega^n} 
- \tilde{\theta}\frac{\pa \jpar \left( {|X|_g^2} \right) \wedge \Omega_\phi^{n-1} \wedge \bOmega^n}{\Omega_\phi^n \wedge \bOmega^n}
+ \frac{\pa \jpar \frac 1 4 | d \phi |_g^2 \wedge \Omega_\phi^{n-1} \wedge \bOmega^n}{\Omega_\phi^n \wedge \bOmega^n},\end{gathered} \]
where we have used the fact that at the point $p$ \[\pa |X|_g^2 = \jpar |X|_g^2 = 0. \]

As we already noticed in the previous section, in the canonical coordinates, we have at $p$
\[ \label{lamaxmodder} \tag{5.17} \frac{\pa \jpar \tilde{\theta} \wedge \Omega_\phi^{n-1} \wedge \bOmega^n}{\Omega_\phi^n \wedge \bOmega^n} 
= \frac 1 n \frac{\tilde{\theta}_{2p\overline{2p}} + \tilde{\theta}_{2p+1\overline{2p+1}}}{\Omega_{2p2p+1}^\phi}.\]

Differentiating the expression (\ref{lamaxmodexpre}) for $\tilde{\theta}$ we obtain
\[ \label{lamaxmodderest} \tag{5.18}  \tilde{\theta}_{p \bar{p}} = D^2_X \phi_{p \bar{p}} - \Gamma_{ik,p \bar{p}}^j \phi_{t_j} X^i X^k  - \Gamma_{ik,p}^J \phi_{t_j \bar{p}} X^i X^k - \Gamma_{ik, \bar{p}}^j \phi_{t_j p} X^i X^k \geq D^2_X \phi_{p \bar{p}} - C(\tilde{\theta} +1 ).\]
In the estimation (\ref{lamaxmodderest}) we have used the facts that $\Gamma_{ij}^k$ vanish at the point $p$ (recall (\ref{chrj}) and (\ref{geodesic})), $\Gamma_{ij}^k$'s derivatives depend only on the derivatives of the initial metric $g$, the gradient of $\phi$ is bounded and \[ \label{mixeddercontrol} \tag{5.19} |\phi_{t_it_j}|<C(1 + \tilde{\theta}). \]

Since we know from Section 4, (\ref{qhe}), that \[ \label{qheh} \tag{5.20} \frac 1 C \leq \Omega^\phi_{2i2i+1} \leq C \] we can estimate the quantity (\ref{lamaxmodder}) by applying (\ref{lamaxmodderest}) and (\ref{qheh})
\[ \label{lamaxmodderestfin} \tag{5.21} \frac 1 n \frac{\tilde{\theta}_{2p\overline{2p}} + \tilde{\theta}_{2p+1\overline{2p+1}}}{\Omega_{2p2p+1}^\phi} 
\geq \frac 1 n  \frac{D^2_X \phi_{2p \overline{2p}} + D^2_X \phi_{2p+1 \overline{2p+1}}}{\Omega^\phi_{2p2p+1}} - C(\tilde{\theta} + 1). \]

In order to deal with the last remaining terms involving derivatives of $\phi$ in (\ref{lamaxmodderestfin}) let us differentiate the equation (\ref{logcpe}) twice in the direction of $X$ obtaining
\[ \label{cpederder} \tag{5.22} \frac 1 2 tr \Big( (\Omega^{ij}_\phi )(D^2_X \Omega_{ij}^\phi ) \Big) - \frac 1 2 tr \Big( (\Omega^{ij}_\phi )(D_X \Omega_{ij}^\phi ) (\Omega^{ij}_\phi )(D_X \Omega_{ij}^\phi )\Big) = D^2_X f + D^2_X \log Pf(\Omega_{ij}). \]

Rewriting the quantity in (\ref{cpederder}) explicitly gives 
\[\label{cpederderexpl} \tag{5.23} \begin{gathered} \frac{ D^2_X \Omega^\phi_{2i2i+1}}{\Omega_{2i2i+1}^\phi} 
= D^2_X f + D^2_X \log Pf(\Omega_{ij}) + \frac 1 2 \Omega_\phi^{ka}D_X \Omega^\phi_{al}\Omega_\phi^{lb} D_X \Omega^\phi_{bk} \\
= D^2_X f + D^2_X \log Pf(\Omega_{ij}) \\
+ \frac{1}{2} \frac{ D_X \Omega_{2k+1 2l}^\phi D_X \Omega_{2l+1 2k}^\phi 
+ D_X \Omega_{2k2l+1}^\phi D_X \Omega_{2l2k+1}^\phi}{\Omega_{2k2k+1}^\phi \Omega_{2l2l+1}^\phi} \\ 
- \frac 1 2 \frac{D_X \Omega_{2k+12l+1}^\phi D_X \Omega_{2l2k}^\phi
+ D_X \Omega_{2k2l}^\phi D_X \Omega_{2l+1 2k+1}^\phi}{\Omega_{2k2k+1}^\phi \Omega_{2l2l+1}^\phi}. \end{gathered}\]

From the formulas we obtained in (\ref{formulas}) we have the expression for (\ref{cpederderexpl}) in terms of the derivatives of $\phi$ as follows
\[ \label{cpederderexplfin} \tag{5.24} \begin{gathered} \frac{1}{2} \frac{ D_X \Omega_{2k+1 2l}^\phi D_X \Omega_{2l+1 2k}^\phi 
+ D_X \Omega_{2k2l+1}^\phi D_X \Omega_{2l2k+1}^\phi}{\Omega_{2k2k+1}^\phi \Omega_{2l2l+1}^\phi} \\ 
- \frac 1 2 \frac{D_X \Omega_{2k+12l+1}^\phi D_X \Omega_{2l2k}^\phi
+ D_X \Omega_{2k2l}^\phi D_X \Omega_{2l+1 2k+1}^\phi}{\Omega_{2k2k+1}^\phi \Omega_{2l2l+1}^\phi} \\
= \frac{1}{2} \frac{ |D_X \phi_{2k+1 \overline{2l+1}} + D_X\phi_{2l \overline{2k}}|^2 
+ |D_X \phi_{2l+1 \overline{2k+1}} + D_X \phi_{2k \overline{2l} }|^2}{\Omega_{2k2k+1}^\phi \Omega_{2l2l+1}^\phi} \\
+ \frac 1 2 \frac{|D_X \phi_{2k+1 \overline{2l} } - D_X \phi_{2l+1 \overline{2k} }|^2
+ |D_X \phi_{2k \overline{2l+1} } - D_X \phi_{2l \overline{2k+1} }|^2}{\Omega_{2k2k+1}^\phi \Omega_{2l2l+1}^\phi} \end{gathered}\]
which is seen to be non negative. We also note that \[ \label{} \tag{5.25} \Omega_{ij}^\phi = \Omega_{ij} + (- \phi_{i \bar{k}}J^{\bar{k}}_j + \phi_{j \bar{k}}J^{\bar{k}}_i)\] which gives (recall (\ref{geodesic})),
\[ \label{omphiderder} \tag{5.26} D^2_X \Omega^\phi_{2i2i+1} = D_X^2 \Omega_{2i2i+1} + D^2_X \phi_{2i \overline{2i}} + D^2_X \phi_{2i+1 \overline{2i+1}} + (- \phi_{2i \bar{k}} D^2_X J^{\bar{k}}_{2i+1} + \phi_{2i+1 \bar{k}} D^2_X J^{\bar{k}}_{2i}).\] 

Applying (\ref{omphiderder}) in (\ref{cpederderexpl}) coupled with (\ref{mixeddercontrol}) and (\ref{cpederderexplfin}) provides 
\[ \label{almostthere} \tag{5.27} \frac{D^2_X \phi_{2p \overline{2p}} + D^2_X \phi_{2p+1 \overline{2p+1}}}{\Omega^\phi_{2p2p+1}} \geq- C(\tilde{\theta} + 1).\]
Finally, applying (\ref{almostthere}) in (\ref{lamaxmodderestfin}) gives
\[ \label{firsttermbound} \tag{5.28} \frac 1 n \frac{\tilde{\theta}_{2p\overline{2p}} + \tilde{\theta}_{2p+1\overline{2p+1}}}{\Omega_{2p2p+1}^\phi} 
\geq - C(\tilde{\theta} + 1) \] providing the lower bound for the first quantity in (\ref{lapineq}).

As for the third factor of (\ref{lapineq}) we recall from the computations for the gradient estimate, cf. (\ref{gradderdersec}), that 

\[ \label{gradderdersecsec} \tag{5.29} \begin{gathered} \frac{\pa \jpar \frac 1 4 | d \phi |_g^2 \wedge \Omega_\phi^{n-1} \wedge \bOmega^n}{\Omega_\phi^n \wedge \bOmega^n} \\
 = \frac{1}{n} \Big( \frac{\phi_{\bar{i}} e^f_i}{e^f} +  \frac{ \phi_{i} e^f_{\bar{i}}}{e^f} + \frac{\phi_{\bar{j}\bar{k}} \phi_{jk}}{|\Omega^\phi_{k k+(-1)^k}|} + \frac{\phi_{i\bar{j}} \phi_{ \bar{i}j}}{|\Omega^\phi_{i i+(-1)^i}|} \Big). \end{gathered} \]

By (\ref{qheh}) and (\ref{c1e}) we obtain from (\ref{gradderdersecsec}) the bound
\[ \label{finishing} \tag{5.30} \frac{\pa \jpar \frac 1 4 | d \phi |_g^2 \wedge \Omega_\phi^{n-1} \wedge \bOmega^n}{\Omega_\phi^n \wedge \bOmega^n} \geq -C' + C(|\phi_{ij}|^2+ |\phi_{i\bar{j}}|^2).\]
We note that 
\[ \label{cplexreal} \tag{5.31} |\phi_{ij}|^2+ |\phi_{i\bar{j}}|^2 \geq C \tilde{\theta}^2.\]
Applying  (\ref{cplexreal}) in (\ref{finishing}) gives us
\[\label{secondcomponent} \tag{5.32} \frac{\pa \jpar \frac 1 4 | d \phi |_g^2 \wedge \Omega_\phi^{n-1} \wedge \bOmega^n}{\Omega_\phi^n \wedge \bOmega^n} \geq -C' + C \tilde{\theta}^2.\]

The second term in (\ref{lapineq}) can be easily seen to satisfy
\[\label{vectderv} \tag{5.33} -\tilde{\theta}\frac{\pa \jpar \left( {|X|_g^2} \right) \wedge \Omega_\phi^{n-1} \wedge \bOmega^n}{\Omega_\phi^n \wedge \bOmega^n} \geq -C \tilde{\theta}.\] This is because after rewriting this term in coordinates as in (\ref{lamaxmodder}) and applying (\ref{qheh}) we observe it depends only on $\tilde{\theta}$ and second derivatives of the metric $g$ alone (as the coefficients of $X$ are constant and we are computing in normal coordinates).

The estimations (\ref{firsttermbound}), (\ref{secondcomponent}) and (\ref{vectderv}) applied in (\ref{lapineq}) deliver
\[ \label{grandfin} \tag{5.34} 0 \geq \frac{1}{2n} \Delta^{Ch}_{I,g_\phi} \tilde{\alpha} \geq C \tilde{\theta}^2 - C' \tilde{\theta} - C''.\]
Inequality (\ref{grandfin}) provides the desired estimate on $\tilde{\theta}$ in terms of $C$, $C'$ and $C''$. The desired bound on $\theta$ follows from that.
\end{proof}
\section{Proof of Theorem \ref{mainthm}}

As we advocated in the introduction having Theorem \ref{c1b}, Theorem \ref{c2b} and using Corollary 5.7 in \cite{AV10} (or Theorem 1.1.13 of \cite{AS17} or Theorem A in \cite{Sr19}), one obtains the $C^{2,\alpha}$ a priori estimate for the solutions of (\ref{qma}) for some $\alpha\in(0,1)$. The rest of the proof are completely standard. For the convenience of the reader we sketch them and we refer to \cite{A13} Section 5 for a the detailed discussion.

Turning to the proof of Theorem \ref{mainthm}. For the given $f$ satisfying (\ref{norm}) we set up the continuity path
\[ \label{cp} \tag{6.1} (\Omega + \pa \jpar \phi)^n = \big( te^f +(1-t) \big) \Omega^n\] for $t \in [0,1]$. In order to prove Theorem \ref{mainthm} it is enough to show that the set $S$ of $t \in [0,1]$, for any fixed $k \geq 1$ and $\alpha \in (0,1)$, such that there exists $\phi \in C^{k+2,\alpha}$ solving (\ref{cp}) is both open and closed. 

Openness in our setting is completely standard. One has to prove that for a fixed $\phi$ the operator
\[ \label{fo} \tag{6.2} U^{k+2,\alpha} \ni \psi \longmapsto \frac{(\Omega+ \pa \jpar \phi + \pa \jpar \psi)^n}{\Omega^n} \in V^{k,\alpha}\] has an open image. In (\ref{fo}) the Banach manifolds, with the induced H\"older norms, are given by
\[\label{} \tag{6.3} \begin{gathered} U^{k+2,\alpha} := \{ \phi \in C^{k+2,\alpha}(M) \: | \: \int_M \phi \Omega^n \wedge \bOmega^n = 0 \} \\ 
V^{k,\alpha} := \{ F \in C^{k,\alpha}(M) \: | \: \int_M F \Omega^n \wedge \bOmega^n = \int_M \Omega^n \wedge \bOmega^n \}. \end{gathered}\] The fact that (\ref{fo}) has an open image was proven in Proposition 5.1 of \cite{A13} and relies on the theory of linear elliptic operators.

As for the closedness it is enough to know that once $t_i \in S$ are such that $t_i \rightarrow t$ then $t \in S$ as well. For any $t_i$ let us  take $\phi_{t_i}$ solving (\ref{cp}) normalized by \[ \label{normsol} \tag{6.4} \sup_M \phi_{t_i}=0 .\]
Once we know that the sequence $\phi_{t_i}$ is bounded in $C^{k+3,\alpha}$ the Kondrakov theorem yields a subsequence converging in a $C^{k+3}$ norm (and hence in $C^{k+2, \alpha}$ norm) to the solution $\phi$ of (\ref{cp}). All we need then is to have 
 a priori estimates for the solutions of (\ref{qma}) normalized by (\ref{normsol}) up to the order $C^{k+3,\alpha}$. From Theorem \ref{c1b}, Theorem \ref{c2b} and Corollary 5.7 in \cite{AV10} we have the $C^2$ estimate. Applying now the Evans--Krylov theorem, 
 cf. \cite{E82}, to the operator  \[ \label{} \tag{6.5} \log \bigg( \frac{(\Omega+ \pa \jpar \phi)^n}{\Omega^n} \bigg)\] defined on those functions for which the hyperhermitian matrix associated to $\Omega+ \pa \jpar \phi$ is positive we obtain a $C^{2,\alpha}$ estimate for some fixed $\alpha$. From this the standard procedure of bootstrapping provides the bounds of any higher order as in Section 17.5 of \cite{GT01}.
 
\begin{remark} \label{comment}
For simplicity of presentation (and calculations for obtaining a priori estimates) we stated Theorem \ref{mainthm} in the setting when the initial metric $g$ is already HK. Actually, our method works equally well just under the assumption that the hypercomplex manifold $(M,I,J,K)$ admits some compatible hyperK\"ahler metric $g'$ and the initial hyperhermitian metric $g$ is arbitrary (in particular HKT). 
In that setting no new, essential, complications arise while performing a priori estimates, provided we still define the test quantities using $g'$. This is because all the new terms are estimable from below by 
\[- \frac{C}{\delta^\sigma} \cdot \frac{1}{\Omega^\phi_{2i 2i+1}},\]
where $C$ is a constant depending on the curvature of the initial metric $g$, $\sigma \in \{\frac{1}{2},1\}$ and $\delta$ is the test quantity we are trying to estimate at the moment. Thus, we can always assume $\delta$ makes the above term arbitrarily small in comparison with 
\[ \gamma' \cdot \frac{1}{\Omega^\phi_{2i 2i+1}},\]
for $\gamma$ which is chosen in the sections above, since otherwise we already obtain a bound on the test quantity. 
\end{remark}

\noindent {FACULTY OF MATHEMATICS AND COMPUTER SCIENCE\\
OF JAGIELLONIAN UNIVERSITY \\
\L OJASIEWICZA  6 \\
30-348, KRAK\'OW \\
POLAND \\
\textit{E-mail address:} Slawomir.Dinew@im.uj.edu.pl\\

\noindent {FACULTY OF MATHEMATICS AND COMPUTER SCIENCE\\
OF JAGIELLONIAN UNIVERSITY \\
\L OJASIEWICZA  6 \\
30-348, KRAK\'OW \\
POLAND \\
\textit{E-mail address:} Marcin.Sroka@im.uj.edu.pl}

\end{document}